\newtheorem{theorem}{Theorem}[section]
\newtheorem{lemma}[theorem]{Lemma}
\newtheorem{definition}[theorem]{Definition}
\newtheorem{conjecture}[theorem]{Conjecture}
\newcommand{\Z}{\mathbb Z}
\DeclareMathOperator{\ord}{ord}
\newcommand{\la}{\langle}
\newcommand{\ra}{\rangle}
\newcommand{\be}{\begin{equation}}
\newcommand{\ee}{\end{equation}}
\newcommand{\und}{\;\mbox{ and }\;}
\newcommand{\nn}{\nonumber}
\newcommand{\ber}{\begin{eqnarray}}
\newcommand{\eer}{\end{eqnarray}}
\newcommand{\Sum}[2]{\underset{#1}{\overset{#2}{\sum}}}
\newcommand{\vp}{\mathsf v}
\DeclareSymbolFont{goo}{OMS}{cmsy}{b}{n}
\DeclareMathSymbol{\gooT}{\mathalpha}{goo}{"1}
\newcommand{\bdot}{\mathbin{\gooT}}
\begin{document}
\title[Structure of a sequence with prescribed zero-sum subsequences]{Structure of a sequence with prescribed zero-sum subsequences: Rank Two $p$-groups}
\author{John J. Ebert}
\address{Department of Mathematical Sciences\\ University of Memphis\\ Memphis, TN 38152\\
USA}
\email{john.ebert01@gmail.com}

\author{David J. Grynkiewicz}
\address{Department of Mathematical Sciences\\ University of Memphis\\ Memphis, TN 38152\\
USA}
\email{diambri@hotmail.com}

\begin{abstract}
Let $G=(\mathbb Z/n\mathbb Z) \oplus (\mathbb Z/n\mathbb Z)$. Let $\textup{s}_{\leq k}(G)$ be the smallest integer $\ell$ such that every sequence of $\ell$ terms from $G$, with repetition allowed, has a nonempty zero-sum subsequence with length at most $k$. It is known that $\textup{s}_{\leq 2n-1-k}(G)=2n-1+k$ for $k\in [0,n-1]$,  with the structure of extremal sequences showing this bound tight  determined  when $k\in \{0,1,n-1\}$, and for various  special cases when $k\in [2,n-2]$. For the remaining values $k\in [2,n-2]$, the characterization of extremal sequences of length $2n-2+k$ avoiding a nonempty zero-sum of length at most $2n-1-k$ remained open in general, with it conjectured that they must all have the form $e_1^{[n-1]} \boldsymbol{\cdot} e_2^{[n-1]} \boldsymbol{\cdot} (e_1 +e_2)^{[k]}$ for some basis $(e_1,e_2)$ for $G$. Here $x^{[n]}$ denotes a sequence consisting of the term $x$ repeated $n$ times. In this paper, we establish this conjecture for all $k\in [2,n-2]$ when $n$ is prime, which in view of other recent work, implies the conjectured structure for all rank two abelian groups.
\end{abstract}

\maketitle

\section{Introduction}

Let $C_n$ denote a cyclic group of order $n$. Let $G$ be a finite abelian group written additively. Then $G=C_{n_1}\oplus C_{n_2}\oplus \ldots \oplus C_{n_r}$ with $1<n_1 \mid n_2 \mid \ldots \mid n_r$, where  $\textup r(G)=r$ is the rank of $G$, and $\exp(G)=n_r$ is the exponent  of $G$. Following standardized notation \cite{Alfredbook} \cite{Ger-Ruzsa-book} \cite{Gbook} detailed in Section \ref{sec2}, let
$$S = g_1 \boldsymbol{\cdot} \ldots \boldsymbol{\cdot} g_{\ell}$$
be a (finite and unordered) sequence of terms $g_i \in G$, written as a multiplicative string with  repetition of terms allowed. Such a sequence is called  \emph{zero-sum} if the sum of its terms equals zero, $\sum_{i=1}^{\ell}{g_i}=0$.

The Davenport Constant of $G$ is the minimal integer $\textup D(G)$ such that any sequence of terms from $G$ with length $|S|\geq \textup D(G)$ must have a nonempty zero-sum subsequence. It is one of the most well studied combinatorial invariants in Additive Number Theory, both of interest from a purely combinatorial perspective as well due to its relevance to the study of Factorization in structures from Commutative Algebra  \cite{Alfredbook} \cite{Ger-Ruzsa-book}. Despite this, its exact value is known only for very limited groups, including $p$-groups and groups of rank at most $2$. There, it is known that  $\textup D(G)=1+\Sum{i=1}{r}(n_i-1)$ \cite{olson-pgroup} \cite{Boas-K-rk2-Dav} \cite{olson-rk2} \cite{Alfredbook}. In particular,
\begin{align*}\textup D(C_n)=n,\quad \textup D(C_n\oplus C_n)=2n-1,\quad\und \textup D(C_p\oplus C_p\oplus C_p)=3p-2,\end{align*} for any $n\geq 1$ and any $p\geq 2$ prime, which we will use implicitly throughout the paper.

The standard proof of $\textup D(C_n\oplus C_n)=2n-1$ \cite{Boas-K-rk2-Dav} \cite{olson-rk2} \cite{Alfredbook} relies upon an inductive strategy, reducing the general case to when $n=p$ is prime, and making use of the axillary invariant $\eta(G)$, defined as the
 minimal integer such that any sequence of terms from $G$ with length $|S|\geq \eta(G)$ must have a nonempty zero-sum subsequence of length at most $\exp(G)$. Later, Delorme, Ordaz and Quiroz introduced \cite{S_<k-invariant-origin-delorme-ordaz} the invariant $\textup s_{\leq k}(G)$ as a common generalization, defined as the minimal integer such that any sequence of terms from $G$ with length $|S|\geq \textup s_{\leq k}(G)$ must have a nonempty zero-sum subsequence of length at most $k$. Indeed, when $k\geq \textup D(G)$, then $\textup s_{\leq k}(G)=\textup D(G)$, and when $k=\exp(G)$, then $\textup s_{\leq k}(G)=\eta(G)$.
 The relations between $\textup{s}_{\leq k}(G)$ and Coding Theory were explored by Cohen and Zemor in \cite{Cohen}. Other related works that deal with $\textup{s}_{\leq k}(G)$ can be found in \cite{freeze} \cite{Roy-s<k} \cite{Gao-liu}. The authors in \cite{kevin-S_<k-invariant} determined $\textup{s}_{\leq k}(G)$ for all finite abelian groups of rank two. Note, since  $\textup s_{\leq k}(G)=\infty $ when $k<\exp(G)$, while $\textup s_{\leq k}(G)=\textup s_{\leq \textup D(G)}(G)$ for all $k\geq \textup D(G)$, that  $\textup s_{\leq \textup D(G)-k}(G)$ is primarily of  interest for $k\in [0,\textup D(G)-\exp(G)]=[0,m-1]$, meaning there is little need to consider values of $k$ outside this range.

\begin{theorem}[\cite{kevin-S_<k-invariant}, Theorem 2]\label{s-theorem}
Let $G = C_m \oplus C_n$, where $m$ and $n$ are integers with $1\leq m \mid n$, and let $k\in [0,m-1]$. Then
$$\textup{s}_{\leq \textup{D}(G)-k}(G)=\textup s_{\leq n+m-1-k}(G)=\textup D(G) + k = m+ n-1+k.$$
\end{theorem}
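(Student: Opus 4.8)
The plan is to recast the claim in the symmetric form $\textup{s}_{\leq j}(G)=2\textup{D}(G)-j$ for $j\in[\exp(G),\textup D(G)]=[n,m+n-1]$: the substitution $j=\textup D(G)-k=m+n-1-k$ (with $k\in[0,m-1]$) turns the stated identity into $\textup s_{\leq j}(G)=\textup D(G)+k=2\textup D(G)-j$. The two endpoints $j=\textup D(G)$ and $j=\exp(G)$ recover $\textup D(G)$ and $\eta(G)=2m+n-2$ respectively, so the theorem just asserts that $\textup s_{\leq j}(G)$ interpolates linearly between these two classical invariants as $j$ runs from the exponent to the Davenport constant. As usual the proof splits into a construction (lower bound) and an existence statement (upper bound), the latter being the substantial half.

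For the lower bound I would fix a basis $(e_1,e_2)$ of $G$ with $\ord(e_1)=m$ and $\ord(e_2)=n$ and take
$$S=e_1^{[m-1]}\boldsymbol{\cdot} e_2^{[n-1]}\boldsymbol{\cdot}(e_1+e_2)^{[k]},\qquad |S|=m+n-2+k=\textup D(G)+k-1.$$
A subsequence using $a$ copies of $e_1$, $b$ of $e_2$, and $c$ of $e_1+e_2$ has sum $(a+c)e_1+(b+c)e_2$, so it is zero-sum exactly when $m\mid a+c$ and $n\mid b+c$. Since $0\le a+c\le (m-1)+k\le 2m-2<2m$ and $0\le b+c\le (n-1)+k\le m+n-2<2n$, a nonempty zero-sum forces $a+c=m$ and $b+c=n$, hence $c\ge 1$ and length $a+b+c=m+n-c\ge m+n-k>m+n-1-k$. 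Thus $S$ has no nonempty zero-sum of length at most $m+n-1-k=\textup D(G)-k$, giving $\textup s_{\leq \textup D(G)-k}(G)\ge \textup D(G)+k$.

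For the upper bound the goal is that every $S$ with $|S|\ge 2\textup D(G)-j$ has a nonempty zero-sum of length at most $j$. Writing $g(S)$ for the least length of a nonempty zero-sum subsequence, this is equivalent to the bound $g(S)\ge j+1\ \Rightarrow\ |S|\le 2\textup D(G)-j-1$; that is, a sequence whose \emph{shortest} zero-sum is long cannot itself be long, and this is the real content (a naive greedy peeling of short zero-sums, or the bare use of $\textup D(G)$ and $\eta(G)$, does not pin down the exact constant for intermediate $j$). I would prove it by the classical reduction to the prime case driven by $m\mid n$: for a prime $p\mid m$ one has $0\to pG\to G\to C_p\oplus C_p\to 0$ with $pG\cong C_{m/p}\oplus C_{n/p}$ still of rank two, and one pushes $S$ into $C_p\oplus C_p$, peels off zero-sum subsequences there whose lifted sums land in $pG$, and feeds the resulting sequence over $pG$ into the induction, tracking lengths so that the linear bound $2\textup D(G)-j$ survives each step.

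The main obstacle is twofold. First is the exact length bookkeeping in the reduction: the prime quotient only controls sums modulo $pG$, so the extracted zero-sums over $C_p\oplus C_p$ must be chosen with tightly controlled lengths, or the additive constant is lost when reassembling a zero-sum of $G$. Second, and harder, is the base case $G=C_p\oplus C_p$, where I must show that a sequence whose shortest nonempty zero-sum exceeds $j$ (in particular exceeds $\exp(G)=p$) has length at most $2(2p-1)-j-1$. This requires the structural description of sequences over $C_p\oplus C_p$ avoiding short zero-sums — essentially that they must concentrate on a few lines through the origin — which is exactly the extremal phenomenon the present paper analyzes at the threshold length $|S|=\textup D(G)+k-1$.
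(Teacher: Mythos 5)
Your lower bound is complete and correct: the sequence $e_1^{[m-1]}\boldsymbol{\cdot} e_2^{[n-1]}\boldsymbol{\cdot}(e_1+e_2)^{[k]}$ with the parity/range analysis of $a+c$ and $b+c$ does show $\textup s_{\leq \textup D(G)-k}(G)\geq \textup D(G)+k$, and it is the same extremal example that underlies Conjecture \ref{conjecture}. The upper bound, however, is only a plan, and the plan has a genuine gap at exactly the point you flag as ``harder'': the base case $G=C_p\oplus C_p$. You propose to handle it via ``the structural description of sequences over $C_p\oplus C_p$ avoiding short zero-sums,'' but no such description is available as an input at the lengths you need --- the structural classification at the threshold length $\textup D(G)+k-1$ is precisely the open inverse problem this paper is solving, and invoking it to prove the (direct) Theorem \ref{s-theorem} is circular. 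The known proofs of the prime case are not structural at all: the original argument in \cite{kevin-S_<k-invariant} (reproduced in this paper's Lemmas \ref{con-system} and \ref{mzs-exists}) applies the Chevalley--Warning-type congruence $\sum_i(-1)^iN^i(T)\equiv 0\pmod p$ of Lemma \ref{5.5.8} to all subsequences $T$ of $S$ of each length $|S|-t$ with $t\in[0,k-1]$, converts the resulting averages into the linear system of Lemma \ref{con-system}, and reads off $N^{2p-1}(S)\equiv k\not\equiv 0\pmod p$; a variant (remarked after Theorem \ref{main-result}) instead lifts $S$ into $C_p\oplus C_p\oplus C_p$ as in Lemma \ref{height-of-S} and uses $\textup D(C_p^3)=3p-2$. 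Either way, the counting identity, not a structure theorem, is the missing engine of your base case.

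A secondary, smaller gap is the inductive step from $C_p\oplus C_p$ to $C_m\oplus C_n$: you correctly identify that the length bookkeeping in the $pG$-reduction is where the additive constant can be lost, but you do not carry it out, and for $\textup s_{\leq \textup D(G)-k}$ (unlike for $\eta$ or $\textup s$) the extracted zero-sums over $C_p\oplus C_p$ must be taken with lengths controlled on both sides, which requires the refined prime-case statement (short zero-sums of length in $[2p-k,2p-1]$, as in Lemma \ref{mzs-exists}(a)) rather than just the value of the constant. As it stands the proposal establishes only one inequality of the theorem.
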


In particular, for $G=C_n \oplus C_n$, we know that
$$\textup{s}_{\leq \textup{D}(G)}(G)=\textup D(G) = 2n-1$$
and
$$\textup{s}_{\leq \textup{exp}(G)}(G)=\eta (G) = 3n-2.$$
It is then natural to ask which extremal sequences  with terms from $G$ show these bounds are tight, i.e., can those sequences $S$ with length $|S|=\textup D(G)-1+k=2n-2+k$ having no nonempty zero-sum subsequence of length at most $\textup D(G)-k=2n-1-k$ be characterized? The cases $k\in\{0,1,n-1\}$ were eventually resolved, with precise structure following due to the combined efforts from numerous papers \cite{propBGaoGer-multof2} \cite{PropB} \cite{Reiher-propB}  \cite{PropBfix} \cite{Schmid-propB}
 (See Conjecture \ref{conjecture} and Theorem \ref{probB}). The resulting characterization has proved useful in various applications, e.g.,  \cite{PropB-app8} \cite{PropB-app1} \cite{PropB-app5}   \cite{PropB-app9} \cite{PropB-app10} \cite{Prop-B-app2} \cite{PropB-app7}  \cite{PropB-app3}    \cite{PropB-app6} \cite{PropB-app12-InverseMatomakiPeng} \cite{PropB-app11} \cite{PropB-app4}. In \cite{S_<k-invariant2/3p}, the problem of characterizing the extremal sequences for the invariant $\textup s_{\leq \textup D(G)-k}(C_n\oplus C_n)$ was proposed (for $n$ prime), with the conjecture stated in \cite{S_<k-invariant2/3p} naturally extended to composite values of $n$ in \cite{inverse-mult}. The conjectured structure, including the known cases for $k\in\{0,1,n-1\}$, can be summarized as follows. Here $x^{[m]}=x\bdot\ldots\bdot x$ denotes the sequence consisting of the element $x\in G$ repeated $m$ times.

\begin{conjecture}[\cite{inverse-mult}, Conjecture 1.1]\label{conjecture} Let $n\geq 2$, let $G=C_n \oplus C_n$, let $k\in [0,n-1]$, and let $S$ be a sequence of terms from $G$ with length  $|S|=\textup D(G)+k-1=2n-2+k$ having no nonempty zero-sum subsequence of length at most  $\textup D(G)-k=2n-1-k$. Then there exists a basis $(e_1,e_2)$ for $G$ such that the following hold.
\begin{itemize}

\item[1.] If $k=0$, then $S\boldsymbol{\cdot}g$ satisfies the description given in Item 2, where $g=-\sigma(S)$.

\item[2.] If $k=1$, then $$S=e_1^{[n-1]}\boldsymbol{\cdot}{\prod}^{\bullet}_{i\in[1,n]}{(x_ie_1+e_2)},$$
for some $x_1,...,x_n \in [0,n-1]$ with $x_1+...+x_n\equiv 1 \mod{n}$.

\item[3.] If $k\in [2,n-2]$, then $$S=e_1^{[n-1]} \boldsymbol{\cdot} e_2^{[n-1]} \boldsymbol{\cdot} (e_1 + e_2)^{[k]}.$$

\item[4.] If $k=n-1$, then $$e_1^{[n-1]} \boldsymbol{\cdot} e_2^{[n-1]} \boldsymbol{\cdot} (xe_1 + e_2)^{[k]}.$$
for some $x\in[1,n-1]$ with $\gcd{(x,n)}=1$.
\end{itemize}
\end{conjecture}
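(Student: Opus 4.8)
The plan is to reduce to $G=\F_p\oplus\F_p$ with $p$ prime---this is the only case proved directly, the passage to composite $n$ being supplied by the reduction in \cite{inverse-mult}---and to exploit that $S$ sits exactly one term below the threshold $\textup{s}_{\leq 2p-1-k}(G)=2p-1+k$ of Theorem \ref{s-theorem}. Three easy observations come first. No term of $S$ equals $0$ (a zero term is a length-$1$ zero-sum, and $1\le 2p-1-k$). No element occurs $p$ times, since $g^{[p]}$ would be a zero-sum of length $p\le 2p-1-k$; thus $\h(S)\le p-1$. Finally, appending any $g\in G$ to $S$ yields a sequence of length $\textup{s}_{\leq 2p-1-k}(G)$, hence a zero-sum of length $\le 2p-1-k$ necessarily using the new term, so every $g\in G$ is the sum of a subsequence of $S$ of length at most $2p-2-k$ (the \emph{covering property}). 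Dually, every nonempty zero-sum subsequence of $S$ has length $\ge 2p-k$, and every minimal one has length at most $\D(G)=2p-1$.

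The heart of the argument has two stages. \emph{First, locate an element of multiplicity exactly $p-1$.} I would aim to show $S$ contains a minimal zero-sum $U\mid S$ of maximal length $\D(G)=2p-1$ (for instance by completing a maximal zero-sum-free subsequence); the already-established case $k=0$ of Conjecture \ref{conjecture}, which is precisely the classification of minimal zero-sum sequences of maximal length over $C_p\oplus C_p$, then forces $U=e_1^{[p-1]}\bdot\prod_{i=1}^{p}(x_ie_1+e_2)$ for some basis $(e_1,e_2)$, exhibiting $e_1$ with multiplicity $p-1$ in $S$. An alternative, avoiding the existence of a length-$(2p-1)$ minimal zero-sum, is to prove $\h(S)=p-1$ directly: were all multiplicities $\le p-2$, the bounded-length subsequence sums would, by the Partition Theorem and a Kneser-type sumset bound \cite{Gbook}, spread across $G$ densely enough to manufacture a zero-sum shorter than $2p-k$. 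Fixing such a basis, write $S=e_1^{[p-1]}\bdot\prod_{j=1}^{p-1+k}(c_je_1+b_je_2)$ with $c_j,b_j\in[0,p-1]$; note already that no factor is a nonzero multiple of $e_1$, since $e_1^{[p-c_j]}\bdot(c_je_1)$ would be a zero-sum of length $\le p\le 2p-1-k$, so $b_j\neq0$ for all $j$.

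\emph{Second, pin down the remaining $p-1+k$ factors.} Here the problem becomes genuinely two-dimensional, and this is the step I expect to be the main obstacle. It is tempting to project modulo $\la e_1\ra$ and argue that the resulting length-$(p-1+k)$ sequence over $C_p$ must be constant; but this is false at the top of the range---for $k=p-2$ the cyclic sequence $1^{[p-1]}\bdot 2^{[k]}$ already avoids every zero-sum of length $\le p-k$---so the $e_1$-coordinates $c_j$ cannot be discarded. Instead I would combine the two surviving constraints: every subsequence of $S$ summing to $0$ has length $\ge 2p-k$, and the covering property is tight precisely at the element $(p-1)(e_1+e_2)$, which forces the entire product $\prod_j(c_je_1+b_je_2)$ to be consumed with small coordinate sums. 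Running a Cauchy--Davenport / DeVos--Goddyn--Mohar analysis \cite{Gbook} on the multiset of vectors $\{(c_j,b_j)\}\subseteq\F_p^2$, the target is to show that, after adjusting the basis, exactly $p-1$ of the factors equal $e_2$ and the remaining $k$ equal $e_1+e_2$, i.e.\ $S=e_1^{[p-1]}\bdot e_2^{[p-1]}\bdot(e_1+e_2)^{[k]}$.

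The difficulty is concentrated in this last analysis: one must control both coordinates simultaneously over $\F_p$ and handle the whole range $k\in[2,p-2]$ uniformly, even though the interaction between the length bound $2p-k$ and the $p-1$ available copies of $e_1$ changes character as $k$ grows---for small $k$ a single offending factor already forces a short zero-sum, whereas for large $k$ one must play several factors off against one another. It is exactly here that primality is indispensable, through Cauchy--Davenport and the polynomial method, and this is the mechanism by which the genuine one-parameter family occurring at $k=n-1$ (Theorem \ref{probB}) collapses to a single sequence as soon as $k\le n-2$. With the prime case established, Conjecture \ref{conjecture}(3) for all rank-two abelian groups follows from \cite{inverse-mult}.
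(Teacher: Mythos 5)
Your outline reproduces the easy first moves (no zero terms, $\h(S)\le p-1$, the covering property) and correctly identifies the overall skeleton---produce a minimal zero-sum subsequence of length $\D(G)=2p-1$, apply the Property~B classification (Theorem \ref{probB}) to extract a basis element $e_1$ of multiplicity $p-1$, then classify the remaining $p-1+k$ terms---but both load-bearing steps are left without an actual argument, and in both places the mechanism you gesture at is not the one that works. First, the existence of a minimal zero-sum subsequence of length exactly $2p-1$ does not follow from ``completing a maximal zero-sum-free subsequence'': a maximal zero-sum-free $W\mid S$ need not have length $2p-2$, and even if it did, there is no reason the term closing it up lies in $S$. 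The paper obtains this via a genuine counting argument: combining the $p$-group congruence $\sum_i(-1)^iN^i(S)\equiv 0\pmod p$ applied to all subsequences of $S$ of each length $|S|-t$, $t\in[0,k-1]$, with the vanishing $N^i(S)=0$ for $i\in[1,2p-1-k]\cup[2p,4p-2k-1]$, one gets a $k\times k$ binomial linear system (Lemma \ref{con-system}) whose unique solution yields $N^{2p-1}(S)\equiv k\pmod p$ (Lemma \ref{mzs-exists}); since $k\not\equiv 0\pmod p$ this forces such a subsequence to exist. Note that Theorem \ref{probB} gives more than an element of multiplicity $p-1$: it gives at least $p$ terms of $S$ in a single coset of $\la e_1\ra$, which is essential input for the next stage.

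Second, the step you yourself flag as ``the main obstacle'' is exactly where your proposal stops being a proof: ``running a Cauchy--Davenport / DeVos--Goddyn--Mohar analysis \ldots the target is to show'' is a restatement of the goal, not an argument, and the claim that the covering property is ``tight precisely at $(p-1)(e_1+e_2)$'' is unsupported. The paper's route here is quite different and is the real content of the result. Writing $S=e_1^{[p-1]}\bdot{\prod}^{\bullet}_{i}(a_ie_1+e_2)\bdot{\prod}^{\bullet}_{i}(b_ie_1+x_ie_2)$ with $\ell\ge p$ terms in the coset $\la e_1\ra+e_2$, one lifts to $G'=C_p^{3}$ by sending each term $g$ to $g+e_3$ and appending $(-e_3)^{[p-k-1]}$ and one further term; Lemma \ref{mzs-exists} shows the resulting length-$(3p-2)$ sequence is a \emph{minimal} zero-sum, and the classification of such sequences containing an element of multiplicity $p-1$ (Lemma \ref{mzs_3d}, again via Theorem \ref{probB}) forces some $ae_1+e_2$ to occur $p-1$ times in $S$. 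Only then, with \emph{two} independent elements of multiplicity $p-1$ in hand, does a one-dimensional reduction succeed: the affine projection $xe_1+ye_2\mapsto\overline{x+y-1}$ together with the zero-sum-free bound $|\Sigma(T)|\ge |T|+|\supp(T)|-1$ pins the remaining $k$ terms to $e_1+e_2$ (Lemma \ref{p-1_repeating}). Without a concrete substitute for the counting lemma and for the lift to $C_p^{3}$, the proposal has two genuine gaps precisely at the points where the theorem is hard.
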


\medskip

As already noted, Conjecture  \ref{conjecture} is known for $k\in \{0,1,n-1\}$, leaving the range $k\in[2,n-2]$ open. In this range, Conjecture \ref{conjecture} is known in various specialized cases, including when $k\leq \frac{2n+1}{3}$ with $n$ a prime power \cite{S_<k-invariant2/3p} \cite{inverse-mult}, as well as for several very  specialized cases derived in \cite{inverse-mult}. In \cite{inverse-mult}, it was shown how the Conjecture \ref{conjecture} holding when $n=p$ is prime would imply the general case. Specifically, the following was shown.

\begin{theorem}[\cite{inverse-mult}, Theorem 1.2]\label{Mult-Prop} Let $n,m\geq 2$ and let $k\in [0,mn-1]$ with   $k=k_mn+k_n$, where  $k_m\in [0,m-1]$ and $k_n\in [0,n-1]$. Suppose Conjecture \ref{conjecture} holds for $k_n$ in $C_{n}\oplus C_{n}$ and also  for $k_m$ in $C_{m}\oplus C_{m}$. Then Conjecture \ref{conjecture} holds for $k$ in $C_{mn}\oplus C_{mn}$.
\end{theorem}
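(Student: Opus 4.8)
The plan is to exploit the natural filtration of $G=C_{mn}\oplus C_{mn}$ by the subgroup $H=nG\cong C_m\oplus C_m$, with quotient $\overline G=G/nG\cong C_n\oplus C_n$ and projection $\pi\colon G\to\overline G$. Write $\overline S=\pi(S)$, a sequence over $\overline G$ of length $|S|=2mn-2+k_mn+k_n$. First I would extract a maximal family of pairwise disjoint nonempty zero-sum subsequences $\overline T_1,\dots,\overline T_r$ of $\overline S$, each of length at most $\exp(\overline G)=n$, and lift each $\overline T_i$ to the corresponding subsequence $S_i$ of $S$, so that $\sigma(S_i)\in H$. Collecting the block sums yields a sequence $W=\sigma(S_1)\bdot\cdots\bdot\sigma(S_r)$ over $H\cong C_m\oplus C_m$, while the un-extracted remainder $\overline S_0$ has, by maximality, no nonempty zero-sum of length at most $n$, hence $|\overline S_0|\le\eta(C_n\oplus C_n)-1=3n-3$.

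The core of the reduction is a two-sided count of $r$. Since a zero-sum subsequence of $W$ of length $t$ lifts, through the blocks (each of length at most $n$), to a zero-sum of $S$ of length at most $tn$, and since $(2m-1-k_m)n\le 2mn-1-k$, any zero-sum of $W$ of length at most $2m-1-k_m$ would produce a forbidden short zero-sum in $S$. Thus $W$ has no such zero-sum, and Theorem \ref{s-theorem} applied to $C_m\oplus C_m$ forces $r=|W|\le\textup{s}_{\leq 2m-1-k_m}(C_m\oplus C_m)-1=2m-2+k_m$. On the other hand, the length identity $|\overline S|=\sum_i|\overline T_i|+|\overline S_0|\le rn+(3n-3)$ gives $r\ge 2m-2+k_m$. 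Hence $r=2m-2+k_m$, and, from the same identity together with $|\overline T_i|\le n$, one gets $|\overline S_0|\ge 2n-2+k_n$.

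The key step is to upgrade $\overline S_0$ from avoiding length-$\le n$ zero-sums to avoiding \emph{all} zero-sums of length at most $2n-1-k_n$. Given a putative zero-sum $\overline V\subseteq\overline S_0$ of length $L\le 2n-1-k_n$, its lift $V\subseteq S$ satisfies $\sigma(V)\in H$; appending $\sigma(V)$ to $W$ yields a sequence over $C_m\oplus C_m$ of length $2m-1+k_m=\textup{s}_{\leq 2m-1-k_m}(C_m\oplus C_m)$, which therefore has a zero-sum of length at most $2m-1-k_m$. As $W$ has no such zero-sum, this one must involve $\sigma(V)$, and lifting it produces a nonempty zero-sum of $S$ of length at most $L+(2m-2-k_m)n\le 2mn-1-k$ — again forbidden. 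Hence $\overline S_0$ has no nonempty zero-sum of length at most $2n-1-k_n$, so $|\overline S_0|\le 2n-2+k_n$, forcing $|\overline S_0|=2n-2+k_n$ and $|\overline T_i|=n$ for every $i$. Now $\overline S_0$ and $W$ are precisely extremal sequences for $\textup{s}_{\leq 2n-1-k_n}(C_n\oplus C_n)$ and $\textup{s}_{\leq 2m-1-k_m}(C_m\oplus C_m)$, so the hypothesis (Conjecture \ref{conjecture} for $k_n$ and for $k_m$) yields a basis $(\overline e_1,\overline e_2)$ of $\overline G$ and a basis $(\hat e_1,\hat e_2)$ of $H$ putting $\overline S_0$ and $W$ into the prescribed forms.

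The remaining and most delicate step is to reassemble this fibered information into a single basis of $G$. Here I would first show that each length-$n$ block is \emph{constant}, $S_i=h_i^{[n]}$ with $\pi(h_i)\in\{\overline e_1,\overline e_2,\overline e_1+\overline e_2\}$, via a re-extraction argument: a term of a block whose residue lies outside this set, or a non-constant block, can be recombined with a suitable zero-sum drawn from the rigid structure of $\overline S_0$ to manufacture a forbidden short zero-sum of $S$. Once the blocks are constant, $\sigma(S_i)=nh_i$ and the identification of $W$ with the $C_m\oplus C_m$-structure pins down the values $nh_i\in nG$. Lifting $(\overline e_1,\overline e_2)$ to a basis $(e_1,e_2)$ of $G$ (possible since a basis of $G/nG$ always lifts to a basis of $C_{mn}\oplus C_{mn}$), and then adjusting the lift by elements of $nG$ so that $ne_1,ne_2$ agree with $\hat e_1,\hat e_2$, should reconcile the two structures and exhibit $(f_1,f_2):=(e_1,e_2)$ with $S=f_1^{[mn-1]}\bdot f_2^{[mn-1]}\bdot(f_1+f_2)^{[k]}$. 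I expect this gluing — in particular forcing the blocks to be constant and matching the $nG$-basis coming from $W$ with the lifted basis coming from $\overline S_0$, all while tracking the boundary forms of Conjecture \ref{conjecture} when $k_m\in\{0,1,m-1\}$ or $k_n\in\{0,1,n-1\}$ — to be the main obstacle; the counting and the two input structure theorems carry the conceptual load, but the fiber-by-fiber rigidity demands the most care.
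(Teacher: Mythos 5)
A preliminary remark: this paper does not prove Theorem \ref{Mult-Prop} at all --- it is imported verbatim from \cite{inverse-mult} and used as a black box --- so there is no in-house proof to measure your attempt against. Judged on its own terms, the first half of your argument is sound and is indeed the standard reduction: the fibering through $H=nG$, the maximal extraction, the two-sided count giving $r=2m-2+k_m$, the upgrade showing $\overline S_0$ avoids zero-sums of length at most $2n-1-k_n$, and hence $|\overline S_0|=2n-2+k_n$ and $|\overline T_i|=n$ for all $i$. The key inequalities $(2m-1-k_m)n\leq 2mn-1-k$ and $L+(2m-2-k_m)n\leq 2mn-1-k$ do check out, so $W$ and $\overline S_0$ are correctly identified as extremal sequences to which the hypothesised cases of Conjecture \ref{conjecture} apply.

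The gap is in the endgame, and it is not merely a matter of ``delicate care'': the data you propose to glue does not determine $S$. Knowing $\pi(S)$ over $G/nG$ together with the block sums $\sigma(S_i)\in nG$ leaves each of the $2n-2+k_n$ terms of the remainder $S_0$ undetermined up to an arbitrary element of $nG$, and even a constant block $h^{[n]}$ cannot be distinguished, by its projection and its sum, from $(h+w)\bdot h^{[n-2]}\bdot (h-w)$ with $w\in nG$. Your proposed repair of ``adjusting the lift $e_i$ by elements of $nG$ so that $ne_i=\hat e_i$'' is also not available: replacing $e_i$ by $e_i+nh$ changes $ne_i$ only by $n^2h$, i.e., within a coset of $n^2G$, which is a proper subgroup of $nG$ whenever $\gcd(m,n)>1$; and even granting constant blocks, the pair $(\pi(h_i),\,nh_i)$ pins down $h_i$ only up to an element of $nG\cap\{g\in G:\,ng=0\}\cong C_{\gcd(m,n)}\oplus C_{\gcd(m,n)}$. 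So the conclusion cannot be extracted from $\overline S_0$ and $W$ alone; one must return to $G$ itself and run further short-zero-sum arguments on mixed subsequences (terms of $S_0$ recombined with partial blocks) to fix the individual terms, while separately threading the boundary forms of Conjecture \ref{conjecture} when $k_m$ or $k_n$ lies in $\{0,1,m-1\}$ resp.\ $\{0,1,n-1\}$, and controlling the non-uniqueness of the chosen block decomposition. That residual work is essentially the entire content of \cite{inverse-mult}; as written, your proposal establishes the (correct) reduction framework but not the theorem.
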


In another recent paper \cite{inverse-mult2}, a more complicated description of all extremal sequences for a general rank two abelian group $G=C_m\oplus C_n$ was given and also shown to follow from Conjecture \ref{conjecture}. Thus the complete characterization of all extremal sequences for the invariant $\textup s_{\leq \textup D(G)-k}(C_m\oplus C_n)$ is reduced to the case
$\textup s_{\leq \textup D(G)-k}(C_p\oplus C_p)$ with $p$ prime, where it remained open for  $k\geq \frac{2p+2}{3}$. The goal of this paper is to resolve this case, establishing Item 3 in Conjecture \ref{conjecture} for all $k\in [2,n-2]$ when $n=p$ is prime, which as discussed, thereby implies Conjecture \ref{conjecture} holds without restriction, and gives the full characterization of all extremal sequences for a general rank two group. Specifically, we will show the following. As our proof does not rely on the main result from \cite{S_<k-invariant2/3p} and works equally well for all values of $k\in[2,p-2]$, this also gives a new proof of the cases $k\leq \frac{2p+1}{3}$ versus that from \cite{S_<k-invariant2/3p}, though we will use arguments and lemmas from \cite{S_<k-invariant2/3p}.

\begin{theorem}\label{main-result}
Let $G=C_p\oplus C_p$ with $p$ a prime, let $k\in [2,p-2]$ be an integer,
 and let $S$ be a sequence of terms from $G$ with $|S|=\textup D(G)+k-1=2p-2+k$ having no nonempty zero-sum subsequence of length at most $\textup D(G)-k= 2p-1-k$. Then there is a basis $(e_1,e_2)$ for $G$ such that
$$S=e_1^{[p-1]}\boldsymbol{\cdot} e_2^{[p-1]}\boldsymbol{\cdot} (e_1+e_2)^{[k]}.$$
\end{theorem}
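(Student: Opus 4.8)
The plan is to prove the theorem by an inverse/structural analysis that first forces a single element to attain maximal multiplicity, then passes to the cyclic quotient by that element, and finally reduces everything to a one-dimensional subset-sum problem over $\mathbb Z_p$. Throughout I reformulate the hypothesis as: every nonempty zero-sum subsequence of $S$ has length at least $2p-k$. Two immediate reductions are available. First, $0\notin\supp(S)$, since a single term equal to $0$ is a zero-sum of length $1\le 2p-1-k$. Second, every term has multiplicity at most $p-1$, for if some $g$ occurred $p$ times, then $g^{[p]}$ would be a zero-sum of length $p\le 2p-1-k$. It is also useful to record the complementation duality: a subsequence $T$ with $\sigma(T)=\sigma(S)$ and $T\ne S$ is the complement of a nonempty zero-sum, so $|T|\le|S|-(2p-k)=2k-2$.

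The first main step is to show that some element $e_1$ has multiplicity exactly $p-1$. This is the heart of the matter, and the hardest part when $k\ge\frac{2p+2}{3}$: in that regime $|S|=2p-2+k\ge 2(2p-k)$, so $S$ can contain two disjoint minimal zero-sums, and the naive ``peel off one short zero-sum'' arguments that suffice for small $k$ break down. I would attack this with the DeVos--Goddyn--Mohar partition theorem together with iterated Cauchy--Davenport estimates in the cyclic quotients $G/\langle g\rangle\cong C_p$: if no element reached multiplicity $p-1$, the terms of $S$ could be organized into a setpartition whose sumset is forced to be large, producing subsequence sums of every short length and hence a zero-sum of length at most $2p-1-k$. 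Here I expect to lean on, and extend to the full range of $k$, the multiplicity and setpartition lemmas of \cite{S_<k-invariant2/3p}, using the complementation duality above to control the long zero-sums as well as the short ones.

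Granting an element $e_1$ of multiplicity $p-1$, extend it to a basis $(e_1,e_2)$ and write $S=e_1^{[p-1]}\cdot R$ with $|R|=p-1+k$. Projecting modulo $\langle e_1\rangle$ gives a sequence $\pi(R)$ over $C_p$ of length $p-1+k$, and the zero-sum condition on $S$ translates into a coupled condition: a subsequence $J$ of $R$ with $\pi(\sigma(J))=0$ can be completed by $a=\left(-\sigma(J)/e_1\right)\bmod p$ copies of $e_1$ to a zero-sum of $S$ of length $|J|+a$, which must be at least $2p-k$. The goal of this step is to show that all terms of $R$ lie in the single coset $e_2+\langle e_1\rangle$, i.e.\ that $\pi(R)$ is constant. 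The minimal-zero-sum length bound alone does not force this, since for $k\ge2$ a sequence such as $1^{[p-2+k]}\cdot 2$ over $C_p$ also has only long minimal zero-sums, so the $e_1$-coordinates must be fed back in; the plan is to apply the Savchev--Chen structure theorem for long (zero-sum free) sequences over $C_p$ to $\pi(R)$ and then use the coupled length inequality to rule out every non-constant configuration. This coupling of the two coordinates is the second genuine obstacle.

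Once $S=e_1^{[p-1]}\cdot\prod_{i=1}^{p-1+k}(c_ie_1+e_2)$ with $c_i\in[0,p-1]$, the problem becomes purely one-dimensional. Since $|R|=p-1+k<2p$, a subsequence $J$ of the $R$-terms has $\pi(\sigma(J))=0$ exactly when $|J|\in\{0,p\}$; running through all $p$-element subsets $J$ and imposing $|J|+a\ge 2p-k$ shows that \emph{every} $p$-element submultiset of $C:=\{c_1,\dots,c_{p-1+k}\}$ has sum lying in $[1,k]\pmod p$. Equivalently, writing $\Sigma=\sum_ic_i$, every $(k-1)$-element subset sum lies in the length-$k$ interval $[\Sigma-k,\Sigma-1]$, so $|\Sigma_{k-1}(C)|\le k$. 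A Cauchy--Davenport/subset-sum spreading estimate (of the kind used in \cite{S_<k-invariant2/3p}) rules out three or more distinct values in $C$; and once $C$ has just two values, say $a$ and $a+d$ with multiplicities $m_1$ and $m_2$, the requirement that all $p$-subset sums lie in $[1,k]$ forces the $k$-term arithmetic progression $\{jd:\ j\in[p-m_1,m_2]\}$ to equal $[1,k]$ in $\mathbb Z_p$, which for $2\le k\le p-2$ has the unique solution $d=1$, $m_1=p-1$, $m_2=k$ (it is exactly at $k=p-1$ that other $d$ survive, matching Item 4 of Conjecture \ref{conjecture}). Translating by $e_2\mapsto e_2-je_1$ to normalize the smaller value to $0$ then yields $C=\{0^{[p-1]},1^{[k]}\}$ and hence $S=e_1^{[p-1]}\cdot e_2^{[p-1]}\cdot(e_1+e_2)^{[k]}$, as claimed. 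The main obstacle thus remains the first step, together with the coordinate-coupling in the projection step; the closing one-dimensional reduction is comparatively routine, and it is there that the hypothesis $k\le p-2$ enters decisively.
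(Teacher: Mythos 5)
Your write-up is a roadmap rather than a proof: the two steps you yourself single out as ``the main obstacles'' are exactly the ones that are not carried out, and the tools you nominate for them are not shown to work. For the first step (producing a term of multiplicity $p-1$), you propose to extend the setpartition/DeVos--Goddyn--Mohar machinery of \cite{S_<k-invariant2/3p} to the full range of $k$ --- but that extension is precisely what was open for $k\geq\frac{2p+2}{3}$, and no argument is given. The paper reaches this point by an entirely different and much shorter route: the counting identity of Lemma \ref{5.5.8}, fed through the congruence system of Lemma \ref{con-system}, gives $N^{2p-1}(S)\equiv k\not\equiv 0\pmod p$ (Lemma \ref{mzs-exists}), so $S$ contains a minimal zero-sum subsequence of length $2p-1$, and Theorem \ref{probB} then hands you both $e_1^{[p-1]}$ and at least $p$ terms in a coset $e_2+\la e_1\ra$ for free.

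The second gap is in your projection step. You want to show that all of $R$ lies in a single coset of $\la e_1\ra$ by applying the Savchev--Chen structure theorem to $\pi(R)$, but $|\pi(R)|=p-1+k\geq p+1$, so $\pi(R)$ is nowhere near zero-sum free and that theorem does not apply to it; and the ``coupled length inequality'' that is supposed to eliminate non-constant configurations is never formulated. You correctly observe that the quotient information alone cannot suffice (your example $1^{[p-2+k]}\bdot 2$), but that observation is where your argument stops. The paper handles this stage differently and never proves your intermediate claim directly: Lemma \ref{height-of-S} permits terms $b_ie_1+x_ie_2$ with $x_i\in[2,p-1]$ outside the coset and eliminates them by lifting $S$ to a minimal zero-sum sequence of length $3p-2$ over $C_p^{\,3}$ (Claim A) and invoking the inverse result for $\textup D(C_p^{3})$ restricted to sequences containing a term of multiplicity $p-1$ (Lemma \ref{mzs_3d}); this lifting is the real content of the paper and has no counterpart in your plan. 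Your closing one-dimensional analysis is essentially Lemma \ref{car1} (Lemma 15 of \cite{S_<k-invariant2/3p}) and is fine in outline --- though ``rules out three or more distinct values'' also needs the actual subset-sum argument --- but it only becomes available after the two missing steps are supplied.
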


The proof of Theorem \ref{main-result} makes use of the characterization of extremal sequences for the Davenport Constant $\textup D(C_p\oplus C_p)$, some combinatorial arguments, and the arguments from \emph{two} separate proofs of Theorem \ref{s-theorem} (when $m=n=p$ is prime): the original given in \cite{kevin-S_<k-invariant}, as well as a new one derived here and accomplished  by lifting to the group $C_p\oplus C_p \oplus C_p$. The latter is a variant on a strategy used for studying the Erd\H{o}-Ginzburg-Ziv Constant $\textup s(G)$ (see e.g. \cite[Proposition 5.8.1]{Alfredbook}), defined as the minimal integer such that any sequence of terms from $G$ with length $|S|\geq \textup s(G)$ must have a nonempty zero-sum subsequence of length  exactly $\exp(G)$.
 We do not explicitly detail the argument separately, simply remarking that the proof of Lemma \ref{height-of-S} easily modifies (when applied to an arbitrary  sequence of length $|S|=2p-1-k$ rather than a specialized one of length $|S|=2p-2+k$) to show $\textup s_{\leq 2p-1-k}(C_p\oplus C_p)=2p-1+k$.

\section{Preliminaries}\label{sec2}

We will briefly present key concepts and notation  used throughout this paper. Let $\mathbb{N}$ denote the set of positive integers and $\mathbb{N}_0=\mathbb{N} \cup \left\lbrace 0 \right\rbrace$. For $x,y\in \mathbb{R}$, we use $[x,y]=\left\lbrace z\in \mathbb{Z}: x\leq z \leq y \right\rbrace$ for the discrete interval between $x$ and $y$. We use $C_n$ to denote a cyclic group of order $n\geq 1$.

\medskip Following standardized notation for combinatorial sequences (\cite{Ger-Ruzsa-book} \cite{Alfredbook} \cite{Gbook}),
for an abelian group $G$, we let  $\mathcal{F}(G)$ be the free abelian monoid with basis $G$, whose elements consist of finite strings of terms from $G$, with the order of terms in the string disregarded.  The elements $S\in \mathcal F(G)$ are called (finite and unordered) sequences $S$ of terms from $G$, which have the form
$$S=g_1 \boldsymbol{\cdot} g_2 \boldsymbol{\cdot} ... \boldsymbol{\cdot} g_{\ell}  = {\prod}^\bullet_{i\in [1,\ell]}{g_i}\in \mathcal{F}(S),$$
with the $g_i\in G$ the terms of the sequence $S$.  For $k\geq 0$ and $g\in G$, we let $g^{[k]}=\underbrace{g\boldsymbol{\cdot} ... \boldsymbol{\cdot} g}_k$ be the sequence with the term $g$ repeating $k$ times, with $g^{[0]}$ the empty sequence consisting of no terms. Letting $$\vp_g(S)=\{i\in [1,\ell]:\; g_i=g\}\geq 0$$ denote the multiplicity of the term $g$ in $S$, we can then write $S$ as
$$S={\prod}^\bullet_{g\in G}{g ^ {[\vp_g(S)]}}\in \mathcal{F}(S).$$ If $\vp_g(S) \geq 1$, then we say that $S$ contains $g$. We call $T$ a subsequence of $S$ if $\vp_g(T) \leq \vp_g(S)$ for all $g\in G$. In such case, let $T^{[-1]}\boldsymbol{\cdot} S=S\boldsymbol{\cdot} T^{[-1]}$ denote the subsequence of $S$ obtained by removing the terms of $T$, that is,
$$T^{[-1]}\boldsymbol{\cdot} S={\prod}^{\bullet}_{g\in G}{g ^ {[\vp_g(S)-\vp_g(T)}]}\in \mathcal{F}(S).$$

\medskip

If $T\in \mathcal{F}(G)$ and $k\geq 1$, we let $T^{[k]}=\underbrace{T\boldsymbol{\cdot} ... \boldsymbol{\cdot} T}_k$ be the sequence consisting of  $T$ repeating $k$ times. If $T^{[k]}$ is a subsequence of $S$, then $T^{[-k]}\boldsymbol{\cdot} S=S\boldsymbol{\cdot} T^{[-k]}=(T^{[k]})^{[-1]}\boldsymbol{\cdot} S$. We use  the following notation:

\begin{itemize}
\item $|S|=\ell = \sum_{g\in G}\vp_g(S)\in \mathbb{N}_0$ is the length of $S$,

\item $\textup h(S) = \max \left\lbrace \vp_g(S):\; g\in G \right\rbrace$ is the maximum multiplicity of $S$,

\item $\sigma(S) = \sum_{i=1}^{\ell}{g_i} = \sum_{g\in G}{\vp_g(S) g}\in G$ is the sum of terms in $S$,

\item $\Sigma(S) = \left\lbrace \sum_{i\in I}g_i : I\subseteq [1,\ell] \mbox{ with } 1\leq |I| \leq \ell \right\rbrace$ is the set of all subsums of $S$,

\item $\Sigma_k(S) = \left\lbrace \sum_{i\in I}g_i : I\subseteq [1,\ell] \mbox{ with } |I|=k \right\rbrace$ is  the set of all length $k$ subsums of $S$,

\item $\Sigma_{\leq k}(S) =  \bigcup_{i\in [1,k]}{\Sigma_i(S)}$.

\end{itemize}

 A sequence $S$ is called
\begin{itemize}
\item \textit{zero-sum free} if $0\not \in \Sigma(S)$,

\item \textit{a zero-sum sequence} if $\sigma(S) = 0$,

\item \textit{a minimal zero-sum sequence} if $S$ is a nonempty zero-sum sequence that does not contain any proper, nonempty zero-sum subsequence.
\end{itemize}

\medskip

If $G$ and $H$ are abelian groups. Then any map $\phi : G \rightarrow H$ can be extended to a map from $\mathcal{F}(G)$ to $\mathcal{F}(H)$ by setting
$$\phi(S)= {\prod}^{\bullet}_{i\in[1,\ell]}{\phi(g_i)}.$$
 We will need the following results and definitions.

\begin{definition} Let $G$ be an abelian group, let $S=g_1 \boldsymbol{\cdot} ... \boldsymbol{\cdot} g_{\ell}\in \mathcal F(G)$ be a sequence of terms from $G$, where $\ell=|S|$, and let $k\geq 0$. Then
$$N^k(S) := |\lbrace I\subseteq [1,|S|] :\; \sum_{i\in I}{g_i} = 0 \ \textup{and} \ |I|=k \rbrace|$$ denotes the number of zero-sum subsequence of $S$ having length $k$.
\end{definition}

\begin{lemma}[\cite{Alfredbook}, Proposition 5.5.8]\label{5.5.8} Let $p$ be a prime, let $G$ be a finite abelian $p$-group, and let $S\in \mathcal F(G)$ be a sequence of terms from $G$. If $|S|\geq \textup D(G)$, then $\Sum{i=0}{|S|}(-1)^iN^i(S)\equiv 0\mod p$.
\end{lemma}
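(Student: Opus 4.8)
The plan is to recast the alternating count as a single coefficient in the group algebra $\F_p[G]$ and then exploit the fact that, for a $p$-group $G$, the augmentation ideal of $\F_p[G]$ is nilpotent with nilpotency index exactly $\textup{D}(G)$. For $g\in G$, write $[g]\in \F_p[G]$ for the corresponding basis element, so that $[g]\cdot[h]=[g+h]$ and $[0]$ is the multiplicative identity. First I would expand the product
$$\prod_{i=1}^{|S|}\bigl([0]-[g_i]\bigr)=\sum_{I\subseteq [1,|S|]}(-1)^{|I|}[\sigma(I)]\in \F_p[G],$$
where $S=g_1\bdot\ldots\bdot g_{|S|}$ and $\sigma(I)=\sum_{i\in I}g_i$. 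Reading off the coefficient of the identity $[0]$ shows it equals $\sum_{I:\,\sigma(I)=0}(-1)^{|I|}=\sum_{i=0}^{|S|}(-1)^iN^i(S)$ reduced modulo $p$ (the term $I=\emptyset$ contributing $N^0(S)=1$). Hence it suffices to prove that the entire product vanishes in $\F_p[G]$.

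Each factor $[0]-[g_i]$ lies in the augmentation ideal $\mathfrak a=\ker\bigl(\F_p[G]\to \F_p\bigr)$, since it maps to $1-1=0$ under the augmentation. Thus the product lies in $\mathfrak a^{|S|}$, and the claim reduces to $\mathfrak a^{|S|}=0$ whenever $|S|\geq \textup{D}(G)$. To establish this I would use the explicit structure of $\F_p[G]$. Writing $G=C_{p^{e_1}}\oplus\cdots\oplus C_{p^{e_r}}$ and using the identity $t^{p^e}-1=(t-1)^{p^e}$ valid in characteristic $p$, one obtains a ring isomorphism $\F_p[G]\cong \F_p[u_1,\dots,u_r]/(u_1^{p^{e_1}},\dots,u_r^{p^{e_r}})$ under which $\mathfrak a$ corresponds to $(u_1,\dots,u_r)$. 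A monomial $u_1^{a_1}\cdots u_r^{a_r}$ is nonzero precisely when $a_j\leq p^{e_j}-1$ for every $j$, so the top-degree nonzero monomial has degree $\sum_{j}(p^{e_j}-1)=\textup{D}(G)-1$ by Olson's formula $\textup{D}(G)=1+\sum_{j}(p^{e_j}-1)$ for the Davenport constant of a $p$-group. Consequently $\mathfrak a^{\textup{D}(G)}=0$, and since $|S|\geq \textup{D}(G)$ we get $\mathfrak a^{|S|}\subseteq\mathfrak a^{\textup{D}(G)}=0$, which finishes the argument.

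The main obstacle is precisely the nilpotency statement $\mathfrak a^{\textup{D}(G)}=0$, and it is here that both hypotheses enter: the $p$-group hypothesis is what makes $\F_p[G]$ local with nilpotent augmentation ideal (for a general finite abelian group $\mathfrak a$ need not be nilpotent, and the conclusion genuinely fails), while the identification of the nilpotency index with $\textup{D}(G)$ rests on Olson's formula. Everything else is a routine expansion. I would remark that an alternative route via character sums $\sum_{i}(-1)^iN^i(S)=\tfrac{1}{|G|}\sum_{\chi\in\widehat G}\prod_{i}(1-\chi(g_i))$ also works, but there the divisibility by $p$ must be extracted from a delicate $\mathfrak p$-adic valuation estimate in $\Z[\zeta_{\exp(G)}]$, whereas the group-algebra argument makes the same input (the $p$-group structure together with Olson's bound) visible in a single nilpotency step.
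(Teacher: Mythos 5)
Your argument is correct: the identification of $\sum_{i=0}^{|S|}(-1)^iN^i(S)$ with the coefficient of $[0]$ in $\prod_{i}([0]-[g_i])\in\F_p[G]$ is exact (the empty set supplying $N^0(S)=1$), and the nilpotency bound $\mathfrak a^{\textup D(G)}=0$ follows as you say from $\F_p[G]\cong\F_p[u_1,\dots,u_r]/(u_1^{p^{e_1}},\dots,u_r^{p^{e_r}})$ together with Olson's formula. The paper itself gives no proof of this lemma, citing it as Proposition 5.5.8 of Geroldinger--Halter-Koch, and your group-algebra argument is essentially the standard (Olson-style) proof found there, so there is nothing to flag.
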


\begin{lemma}[\cite{Bounding_lemma}, Lemma 2.7]\label{bounding_lemma} Let $G$ be an abelian group and let $S\in\mathcal{F}(G)$ be a zero-sum free sequence. Then
$$\left|\Sigma{(S)}\right|\geq |S| + |\textup{supp}(S)|-1.$$
\end{lemma}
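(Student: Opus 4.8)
The plan is to argue by induction on the length $|S|$, the cases $|S|\le 1$ being immediate. The engine is a single observation that holds for \emph{any} decomposition $S=T\bdot g$ into a term $g$ and the complementary subsequence $T$: the total sum $\sigma(S)$ lies in $\Sigma(S)\setminus\Sigma(T)$. Indeed $\sigma(S)\neq 0$ since $S$ is zero-sum free, and if $\sigma(S)=\sigma(T')$ for some nonempty $T'\subseteq T$, then $(T')^{[-1]}\bdot S$ is a nonempty subsequence of $S$ (as $|T'|<|S|$) with $\sigma\big((T')^{[-1]}\bdot S\big)=\sigma(S)-\sigma(T')=0$, contradicting zero-sum freeness. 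Hence $|\Sigma(S)|\ge|\Sigma(T)|+1$. When the removed term still occurs in $T$, that is $\vp_g(S)\ge 2$, we have $\supp(T)=\supp(S)$, so the inductive hypothesis $|\Sigma(T)|\ge|T|+|\supp(T)|-1$ together with this single gain already gives $|\Sigma(S)|\ge|S|+|\supp(S)|-1$. Repeatedly stripping off terms of multiplicity at least two in this way reduces the whole statement to the squarefree case $\h(S)=1$, which carries the genuine content. (Note this baseline only yields the weak bound $|\Sigma(S)|\ge|S|$; the extra summand $|\supp(S)|-1$ is precisely what must still be extracted, and it exceeds what Kneser's theorem or Cauchy--Davenport can supply on their own, so both the arithmetic-progression shape of the subset-sum structure and zero-sum freeness will have to be used.)

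For the squarefree step, write $S=T\bdot g$ with $g\notin\supp(T)$ and $T$ nonempty; since $\supp$ drops by one upon removing $g$, I now need \emph{two} new subsums beyond $\Sigma(T)$. Two candidates present themselves: the total sum $\sigma(S)$, new by the argument above, and the singleton $g$. If $g\notin\Sigma(T)$, then also $g\neq\sigma(S)$ (otherwise $\sigma(T)=0$, impossible for the nonempty zero-sum free $T$), so $\{g,\sigma(S)\}$ are the two required new subsums and the induction closes. More conceptually, setting $A=\Sigma(T)\cup\{0\}$, every subsum of $S$ using the term $g$ lies in $A+g$ and none of these equals $0$ (else $-jg\in A$ would produce a zero-sum subsequence), so that $\Sigma(S)=\Sigma(T)\cup(A+g)$; the needed gain of two is exactly the assertion that $A+g\not\subseteq\Sigma(T)\cup\{\sigma(S)\}$, equivalently that some $s\in\Sigma(T)$ satisfies $s-g\notin\Sigma(T)\cup\{0\}$.

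\emph{The main obstacle} is therefore the degenerate configuration in which the two candidates collapse, namely $(\Sigma(T)\cup\{0\})+g=\Sigma(T)\cup\{\sigma(S)\}$, which forces $g\in\Sigma(T)$. Here translation by $g$ becomes a near-bijection of $\Sigma(T)$: it maps $\Sigma(T)\setminus\{\sigma(T)\}$ bijectively onto $\Sigma(T)\setminus\{g\}$, with $g$ the unique source and $\sigma(T)$ the unique sink. Tracing the forward orbit of $g$ shows $\sigma(T)=cg$ for some $c\ge1$ with $g,2g,\dots,cg\in\Sigma(T)$, while every remaining orbit is a full cycle, hence a complete nontrivial coset of $\la g\ra$ contained in $\Sigma(T)$. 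I would finish by showing this rigid $\la g\ra$-coset structure of $\Sigma(T)$ cannot actually be realized by the subset sums of a zero-sum free sequence $T$ with distinct terms: the full cosets together with zero-sum freeness and the distinctness of the terms of $T$ should be pushed to a contradiction (or, in the remaining multiples-only case $\supp(S)\subseteq\la g\ra$, the claim reduces to the lemma for a distinct-term sequence inside the cyclic group $\la g\ra$, where subset-sum estimates in cyclic groups settle it directly). Pinning down this degenerate case cleanly, so that the support contribution $|\supp(S)|-1$ is always recovered, is the step I expect to demand the most care.
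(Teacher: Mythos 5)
Your scaffolding is correct and checks out: the observation $\sigma(S)\notin\Sigma(T)\cup\{0\}$, the stripping of terms with $\vp_g(S)\ge 2$ (which does reduce everything to $\textup{h}(S)=1$), the two-candidate argument when $g\notin\Sigma(T)$, and the orbit analysis in the collapsed case are all valid. In particular, your structural conclusion there is right: translation by $g$ is a bijection from $\Sigma(T)\cup\{0\}$ onto $\Sigma(T)\cup\{\sigma(S)\}$, forcing $\sigma(T)=cg$ with $g,2g,\dots,cg\in\Sigma(T)$, $\ord(g)\ge c+2$, and the remainder of $\Sigma(T)$ a union of full nontrivial cosets of $\la g\ra$.

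The genuine gap is that the proof stops exactly where the lemma's content lies: you never rule out this degenerate configuration, you only announce that it "should be pushed to a contradiction," and neither suggested finish is substantiated. (i) In the coset-containing case you give no argument at all, and the cosets do not automatically create slack: each full coset adds $\ord(g)\ge c+2$ elements to $\Sigma(T)$, but it can also harbor up to $\ord(g)$ distinct terms of $T$, so the inductive bound $|\Sigma(T)|\ge 2|T|-1$ plus the single new element $\sigma(S)$ need not reach $2|S|-1$; some interplay between zero-sum freeness and the coset structure must actually be exhibited. (ii) In the coset-free case every term of $T$ lies in $\{2g,\dots,cg\}$, so all of $S$ sits inside $\la g\ra$, and the claim you then need is $|\Sigma(S)|\ge 2|S|-1$ for a squarefree zero-sum free sequence in a cyclic group of \emph{arbitrary} (in particular composite) order --- which is precisely the lemma itself in the cyclic case; the off-the-shelf subset-sum estimates you allude to (Olson-type bounds for distinct residues) are proved for $\mathbb{Z}/p\mathbb{Z}$ and do not cover composite order, so as stated this step is circular. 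Small cases do suggest the configuration is obstructed by the congruence constraints ($\sigma(T)=cg$, subset sums confined to $\{g,\dots,cg\}$ modulo $\ord(g)\ge c+2$), but no general argument is given. Note finally that the paper does not prove this statement --- it imports it as \cite{Bounding_lemma}, Lemma 2.7 --- so your attempt must stand alone, and as written it is a correct reduction of the lemma to its hard kernel rather than a proof.
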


\begin{theorem}[\cite{PropB, Reiher-propB}]\label{probB} Let $G=C_n \oplus C_n$ with $n\geq 2$ and let $S\in \mathcal{F}(G)$ be a minimal zero-sum sequence with length $\textup{D}(G)=2n-1$. Then $S$ has the following form:
$$S=e_1^{[n-1]}\boldsymbol{\cdot} {\prod}^{\bullet}_{i\in[1,n]}{(x_ie_1+e_2)}$$
with $x_i\in [0,n-1]$ and $\sum_{i=1}^{n}{x_i} \equiv 1 \mod{n}$, for some basis $(e_1,e_2)$ for $G$.
\end{theorem}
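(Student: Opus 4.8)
The plan is to first reduce the statement to the following \emph{Multiplicity Claim}: every minimal zero-sum sequence $S$ over $G=C_n\oplus C_n$ of length $\textup{D}(G)=2n-1$ contains some term $g$ with $\vp_g(S)=n-1$. Granting this, the asserted structure follows by an elementary quotient argument. First, $g$ must have order $n$: if $\ord(g)=d<n$, then $d\le n-1=\vp_g(S)$, so $g^{[d]}$ would be a proper nonempty zero-sum subsequence of $S$, contradicting minimality. Set $e_1:=g$, so that $S=e_1^{[n-1]}\bdot T$ with $|T|=n$ and $\vp_{e_1}(T)=0$, and let $\phi\colon G\to G/\la e_1\ra\cong C_n$ be the quotient map. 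Then $\phi(T)$ is a zero-sum sequence over $C_n$ of length $n$, and in fact a \emph{minimal} one: for any proper nonempty subsequence $T'$ of $T$ with $\sigma(T')=ce_1\in\la e_1\ra$ (where $c\in[0,n-1]$), the subsequence $e_1^{[n-c]}\bdot T'$ of $S$ (using that $n-c\le n-1$ copies of $e_1$ are available when $c\ge 1$, while $c=0$ already makes $T'$ a zero-sum) has length at most $2n-2$ and sum $ne_1=0$, so it would be a proper nonempty zero-sum subsequence of $S$, contradicting minimality. Since the minimal zero-sum sequences over $C_n$ of length $n$ are exactly $\bar g^{[n]}$ for generators $\bar g$, we get $\phi(T)=\bar g^{[n]}$; choosing $e_2\in G$ with $\phi(e_2)=\bar g$ yields a basis $(e_1,e_2)$ in which every term of $T$ has the form $x_ie_1+e_2$, and $0=\sigma(S)=\big(n-1+\sum_i x_i\big)e_1+ne_2$ forces $\sum_i x_i\equiv 1\pmod n$, which is the desired form.

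It remains to prove the Multiplicity Claim, and the main obstacle is the base case $n=p$ prime. Here every term already satisfies $\vp_g(S)\le p-1$ (a $p$-fold repetition of the order-$p$ element $g$ would give the proper zero-sum $g^{[p]}$), so the claim asserts that the maximum multiplicity $\textup h(S)$ attains this ceiling. Since $S$ is a minimal zero-sum sequence of length $2p-1=\textup{D}(G)$, its only zero-sum subsequences are the empty one and $S$ itself, so $N^i(S)=0$ for all $i\in[1,2p-2]$; in particular $S$ has no zero-sum subsequence of length $p$, and Lemma \ref{5.5.8} (whose alternating sum reduces to $1+(-1)^{2p-1}=0$) is satisfied trivially and yields no information. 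The strategy I would pursue is therefore to count \emph{weighted} zero-sum configurations over $\F_p^2$ by the polynomial method: writing the terms as $(a_i,b_i)\in\F_p^2$, one works with the three degree-$(p-1)$ polynomials $\sum_i x_i^{p-1}$, $\sum_i a_i x_i^{p-1}$, $\sum_i b_i x_i^{p-1}$ in the variables $x_1,\dots,x_{2p-1}$ --- the same triple that governs $\textup{D}(C_p^3)=3p-2$ and underlies Lemma \ref{5.5.8} --- together with the Chevalley--Warning theorem and the Combinatorial Nullstellensatz. The hard part, due to Reiher, is to extract from these congruences a rigidity forcing $\textup h(S)=p-1$: one shows that the vanishing of the intermediate $N^i(S)$ is incompatible with every multiplicity being at most $p-2$, for instance by producing, under the latter assumption, a zero-sum subsequence of intermediate length. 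This Nullstellensatz/Chevalley--Warning rigidity step is the genuine difficulty and the technical heart of the proof.

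To pass from the prime case to general $n$, I would argue by induction on the number of prime factors of $n$ via the multiplicative reduction of Gao--Geroldinger, in the same spirit as Theorem \ref{Mult-Prop}: if the claim holds for $C_m^2$ and for $C_p^2$ with $n=pm$ and $p$ prime, then it holds for $C_{pm}^2$. Concretely, one uses the filtration $0\to mG\to G\to G/mG\to 0$ with $mG\cong C_p^2$ and $G/mG\cong C_m^2$; given a minimal zero-sum $S$ over $C_n^2$ of length $2n-1$, the image in $C_m^2$ and the induced fibres over $C_p^2$ are analyzed by applying the inductively known structure for $C_m^2$ (pinning down most terms up to a common basis) and the prime case for $C_p^2$ (upgrading this to the multiplicity-$(n-1)$ conclusion on the fibres). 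Carrying out this descent --- reconciling the bases produced at the two levels and controlling the finitely many exceptional fibres --- is routine but bookkeeping-intensive, and Lemma \ref{bounding_lemma}, bounding $|\Sigma(\cdot)|$ for zero-sum free sequences, is exactly the tool needed to control the fibre sums. Once the Multiplicity Claim is established for all $n$, the structural deduction of the first paragraph completes the proof.
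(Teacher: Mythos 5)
Your opening paragraph is correct and standard: the equivalence between the full structural statement and your Multiplicity Claim (that $\textup h(S)=n-1$, classically called Property~B) is exactly right, including the order argument for $g$, the minimality of $\phi(T)$ in the quotient via padding with copies of $e_1$, the inverse characterization of minimal zero-sums of length $n$ over $C_n$, and the verification that $(e_1,e_2)$ is a basis with $\sum x_i\equiv 1\pmod n$. But that reduction is the easy and well-known part, and the remainder of the proposal does not prove the claim. For the prime case you name the tools (Chevalley--Warning, the Combinatorial Nullstellensatz, the $N^i$ congruences) and then explicitly defer ``the hard part, due to Reiher''; no argument is actually given. As you yourself observe, Lemma \ref{5.5.8} is vacuous when applied to a minimal zero-sum sequence of length $2p-1$, so the counting congruences alone cannot force $\textup h(S)=p-1$ --- the rigidity step is a dissertation-length argument, not a gap one can wave at. A smaller inaccuracy: the triple $\sum_i x_i^{p-1},\ \sum_i a_ix_i^{p-1},\ \sum_i b_ix_i^{p-1}$ has total degree $3p-3\geq 2p-1$ in $2p-1$ variables, so Chevalley--Warning does not even apply directly in this setting; that triple is the one governing $\textup D(C_p^3)=3p-2$ with $3p-2$ variables.

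The composite descent is likewise not proved, and is moreover mischaracterized as ``routine but bookkeeping-intensive.'' The image of $S$ in $G/mG\cong C_m\oplus C_m$ is a zero-sum sequence of length $2n-1$, not a minimal one of length $2m-1$, so the inductive structure result for $C_m^2$ cannot be applied to it directly: one must factor the image into many zero-sum subsequences, analyze the induced fibre sums in $mG\cong C_p^2$, and reconcile the bases produced at the two levels. This transfer is the actual content of \cite{propBGaoGer-multof2}, \cite{Schmid-propB} and \cite{PropB}, and it was delicate enough that an error in the chain had to be repaired in \cite{PropBfix}; your appeal to Lemma \ref{bounding_lemma} as ``exactly the tool needed'' is an unsubstantiated placeholder. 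For comparison: the paper itself does not prove Theorem \ref{probB} at all --- it imports it from \cite{PropB} and \cite{Reiher-propB}. So your proposal correctly reconstructs the architecture of the known proof (reduce to Property~B; prime case by Reiher; multiplicative descent for composite $n$), which is genuinely useful orientation, but as a proof it has two concrete gaps --- the prime case and the descent --- and these are precisely the two hard theorems in the literature, not steps that can be certified as sketched.
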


\begin{lemma}[\cite{S_<k-invariant2/3p}, Lemma 15]\label{car1} Let $G=C_n \oplus C_n$, let $k\in [2,n-2]$, and let
 $$S=e_1^{[n-1]}\boldsymbol{\cdot} {\prod}^{\bullet}_{i\in[1,n+k-1]}{(x_ie_1+e_2)}\in \mathcal F(G),$$
where $x_i\in [1,n]$ for $i\in [1,n+k-1]$ and $\sum_{i=1}^n{x_i}\equiv 1 \mod{n}$. If $0\not\in \Sigma_{\leq 2n-1-k}{(S)}$, then there exists a basis $(e_1,f_2)$ for $G$, where $f_2 = xe_1 +e_2$ for some $x\in[1,n]$, such that
$$S=e_1^{[n-1]}\boldsymbol{\cdot} f_2^{[n-1]}\boldsymbol{\cdot} (e_1+f_2)^{[k]}.$$
\end{lemma}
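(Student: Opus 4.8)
The plan is to pass to coordinates and reduce the statement to a purely additive question about subset sums in $C_n$. Writing elements in the basis $(e_1,e_2)$ as pairs, every term $x_ie_1+e_2$ has second coordinate $1$, so a nonempty zero-sum subsequence of $S$ must use exactly $n$ of these height-one terms (the second coordinate must be a multiple of $n$, and $n\leq|S|<2n$), together with $a\in[0,n-1]$ copies of $e_1$; such a subsequence has length $n+a$ and is forbidden precisely when $n+a\leq 2n-1-k$, i.e.\ $a\leq n-1-k$. Hence $0\notin\Sigma_{\leq 2n-1-k}(S)$ is equivalent to the following condition on the sequence $U={\prod}^{\bullet}_{i\in[1,n+k-1]}x_i$ over $C_n$ of length $n+k-1$: every $n$-term subsum lies in $\{1,\dots,k\}\bmod n$, that is $\Sigma_n(U)\subseteq[1,k]$. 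Equivalently, writing $T=\sigma(U)$, the set $\Sigma_{k-1}(U)$ of complementary $(k-1)$-term subsums lies in the window $[T-k,T-1]$ of $k$ consecutive residues. The admissible basis change $f_2=xe_1+e_2$ merely translates every $x_i$ by $-x$ and preserves this condition, so I am free to normalize later.

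Next I would record two easy facts and show they suffice once the support is controlled. First, no value occurs $n$ times in $U$, since $n$ equal terms produce an $n$-subsum $\equiv 0$, a forbidden zero-sum of length $n\leq 2n-1-k$; thus $\h(U)\leq n-1$. Second, the whole statement follows once I prove $|\supp(U)|\leq 2$. Indeed $|\supp(U)|=1$ is impossible by the previous sentence, so $\supp(U)=\{u,w\}$ with multiplicities $a,b\leq n-1$ and $a+b=n+k-1$; then $\Sigma_n(U)=\{(n-a)d,\,(n-a+1)d,\dots,bd\}\bmod n$ with $d=w-u$, an arithmetic progression of exactly $k$ terms. Forcing this into the size-$k$ set $[1,k]$ compels $d=\pm1$ (a $k$-term progression can fill a window of $k$ consecutive residues only if its common difference is $\pm1$), so $u$ and $w$ are consecutive; the containment $[n-a,b]\subseteq[1,k]$ together with $b-(n-a)+1=k$ then forces $n-a=1$ and $b=k$. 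Choosing $f_2$ to be the smaller of $u,w$ yields the stated form $e_1^{[n-1]}\boldsymbol{\cdot}f_2^{[n-1]}\boldsymbol{\cdot}(e_1+f_2)^{[k]}$.

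The heart of the argument is therefore $|\supp(U)|\leq 2$, and here I would invoke Lemma \ref{bounding_lemma}. Let $g$ be a term of maximal multiplicity $\h(U)\leq n-1$, and let $U'$ be the sequence of nonzero terms of $(-g)+U$, so that $|\supp(U')|=|\supp(U)|-1$. The sequence $U'$ is zero-sum free: a nonempty zero-sum subsequence indexed by $R$ would satisfy $\sum_{i\in R}x_i=|R|g$, and padding with $n-|R|$ copies of $g$ would give an $n$-subsum equal to $ng\equiv 0$, a forbidden length-$n$ zero-sum. By Lemma \ref{bounding_lemma}, $|\Sigma(U')|\geq|U'|+|\supp(U')|-1$. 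Conversely, padding a subset $R\subseteq U'$ up to size $k-1$ with copies of $g$ shows $\Sigma_{\leq k-1}(U')+(k-1)g\subseteq\Sigma_{k-1}(U)$, which lies in a window of $k$ residues; combining these forces the support of $U'$ to be small. The main obstacle is exactly the mismatch of subset \emph{sizes}: the bounding lemma controls all-size subset sums while the hypothesis controls only size-$n$ (equivalently size-$(k-1)$) sums, so the padding by copies of $g$ is unavoidable and needs roughly $k-2$ spare copies of $g$. This is automatic in the near-extremal regime (indeed when $\h(U)=n-1$ the size-$n$ embedding $\Sigma_n(U)\supseteq\Sigma(U')$ closes the argument at once), but it is delicate when $k$ is close to $n$ and the multiplicities of $U$ are evenly spread---precisely the range left open by earlier work. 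I would settle that case by also exploiting the dual size-$n$ embedding together with a direct largeness estimate for $\Sigma_n(U)$ (a Cauchy--Davenport / DeVos--Goddyn--Mohar type bound), which forces $|\Sigma_n(U)|>k$ as soon as $U$ carries three distinct values of small multiplicity.
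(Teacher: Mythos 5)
This lemma is not proved in the present paper at all: it is quoted from \cite{S_<k-invariant2/3p} (Lemma 15), so there is no internal proof to compare against. On its own terms, your reduction is correct: a nonempty zero-sum subsequence of $S$ must use exactly $n$ of the $n+k-1$ terms with $e_2$-coordinate $1$, its length is then $n+a$ with $a\equiv-\sigma\bmod n$, and the hypothesis is equivalent to $\Sigma_n(U)\subseteq[1,k]$ for $U={\prod}^{\bullet}x_i$ over $C_n$. The endgame is also fine: once $|\supp(U)|\le 2$ is known, the two multiplicities and the difference are pinned down exactly as you describe, using the same ``difference of a short arithmetic progression is unique up to sign'' fact that the paper itself invokes inside the proof of Lemma \ref{p-1_repeating}.

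The gap is the central step $|\supp(U)|\le 2$. Your zero-sum-freeness argument for $U'$ (the nonzero part of $U-g$ with $g$ of maximal multiplicity $h=\h(U)$) pads a putative zero-sum subset $R\subseteq U'$ with $n-|R|$ copies of $g$, which is only possible when $|R|\ge n-h$. That is automatic when $h=n-1$, and then Lemma \ref{bounding_lemma} does close the argument since $|U'|=k$; but nothing you have written establishes $h=n-1$, and for $h<n-1$ the small subsets $R$ are simply not controlled, so zero-sum-freeness of $U'$ is unproved. Your proposed repair --- a Cauchy--Davenport / DeVos--Goddyn--Mohar type bound forcing $|\Sigma_n(U)|>k$ whenever $|\supp(U)|\ge 3$ --- does not work as stated: with all multiplicities at most $n-1$ the DGM bound gives only $|\Sigma_n(U)|\ge 1-n+|U|=k$, which is attained with equality under the hypothesis $\Sigma_n(U)\subseteq[1,k]$ \emph{regardless} of the support size, so no contradiction arises. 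Excluding three-element supports therefore requires the equality (inverse) case of such a bound, or some other structural input, and that is precisely the substance of the lemma; as written, the hard regime --- $k$ close to $n$ with the multiplicities spread out --- is asserted rather than proved.
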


\section{Proof of main result}
To start determining the structure of $S\in \mathcal{F}(C_p^2)$ where $|S|=2p-2+k$ and $0\not \in \Sigma_{\leq \textup{D}(C_p^2)-k}(S)$, we will first show that $S$ has a zero-sum subsequence of length $\textup{D}(C_p^2)$. To accomplish this, we will need the following two lemmas, which extend arguments used in  \cite[Lemma 14]{S_<k-invariant2/3p},  themselves based on the original proof of Theorem \ref{s-theorem} given in \cite{kevin-S_<k-invariant}.

\begin{lemma}\label{con-system} Let $p$ be a prime and $k\in [1,p-1]$. Consider the family of $k$ linear congruencies in the variables $x_1,\ldots,x_k$:
\be\label{congrency}
\binom{2p-2+k}{t} + \binom{2k-2}{t}x_1 + \binom{2k-3}{t}x_2 + \ldots + \binom{k-1}{t}x_k \equiv 0 \mod{p},
\ee
where $t\in [0,k-1]$. Then the unique solution to the above system is   $x_s \equiv (-1)^{k-s+1}\binom{k}{k-s+1} \mod{p}$ for $s\in [1,k]$.
\end{lemma}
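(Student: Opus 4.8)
The plan is to read the congruences as a single linear system $M\mathbf{x}\equiv \mathbf{b}\pmod p$ in the unknowns $x_1,\dots,x_k$, where the coefficient of $x_s$ in the congruence indexed by $t$ is $\binom{2k-1-s}{t}$ (columns indexed by $s\in[1,k]$, rows by $t\in[0,k-1]$) and the constant term is $b_t=\binom{2p-2+k}{t}$. I would first establish uniqueness by showing $\det M\not\equiv 0\pmod p$, and then verify directly that the proposed values form a solution; together these pin it down as the unique one.

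For uniqueness, note that the entries $M_{t,s}=\binom{a_s}{t}$ have $a_s=2k-1-s$ running over the $k$ consecutive integers $k-1,k,\dots,2k-2$ as $s$ runs over $[1,k]$. Since for fixed $t$ the function $\binom{x}{t}$ is a polynomial in $x$ of degree $t$ with leading coefficient $1/t!$, the matrix $\left(\binom{a_s}{t}\right)$ is obtained from the Vandermonde matrix $\left(a_s^{\,t}\right)$ by lower-triangular row operations, so that
$$\det M=\Big(\prod_{t=0}^{k-1}\tfrac{1}{t!}\Big)\prod_{1\le i<j\le k}(a_j-a_i).$$
Every factorial $t!$ with $t\le k-1\le p-2$ is invertible mod $p$, and every difference $a_j-a_i$ has absolute value in $[1,k-1]\subseteq[1,p-1]$, hence is nonzero mod $p$; thus $\det M\not\equiv 0\pmod p$ and the system has at most one solution.

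For existence, I would apply the reindexing $s=k-j+1$, which converts the coefficient $\binom{2k-1-s}{t}$ into $\binom{k+j-2}{t}$ and the proposed value $x_s=(-1)^{k-s+1}\binom{k}{k-s+1}$ into $(-1)^j\binom{k}{j}$, so the left side of the $t$-th congruence becomes $\binom{2p-2+k}{t}+\sum_{j=1}^{k}(-1)^j\binom{k}{j}\binom{k+j-2}{t}$. The crucial observation is that the constant term is exactly the missing $j=0$ summand: by Lucas' theorem, since $2p-2+k$ has base-$p$ digits $(2,k-2)$ when $k\ge2$ and $t<p$, one gets $\binom{2p-2+k}{t}\equiv\binom{k-2}{t}=(-1)^0\binom{k}{0}\binom{k-2}{t}\pmod p$ (the case $k=1$ being a one-line direct check, which also covers $p=2$). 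Hence the entire left side is congruent to $\sum_{j=0}^{k}(-1)^j\binom{k}{j}\binom{k+j-2}{t}$.

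Finally I would invoke the finite-difference identity $\sum_{j=0}^{k}(-1)^j\binom{k}{j}g(j)=(-1)^k(\Delta^k g)(0)=0$, valid for every polynomial $g$ of degree strictly less than $k$. Applying it to $g(j)=\binom{k+j-2}{t}$, which is a polynomial in $j$ of degree $t\le k-1<k$, shows the sum vanishes as an integer, so every congruence is satisfied. I expect the main obstacle to be the bookkeeping in this existence step, namely correctly reindexing the system and then recognizing via Lucas' theorem that the constant term $\binom{2p-2+k}{t}$ is precisely the $j=0$ term needed to complete the alternating sum into a $k$-th finite difference of a polynomial of degree below $k$; the determinant computation for uniqueness is comparatively routine.
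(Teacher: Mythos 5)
Your proposal is correct, and it takes a genuinely different route from the paper. The paper treats \eqref{congrency} as $AX\equiv 0\pmod p$ with $X=(1,x_1,\dots,x_k)^T$ and repeatedly applies the Pascal recurrence $\binom{n}{i}-\binom{n-1}{i-1}=\binom{n-1}{i}$ as row operations ($k-1$ rounds of $k-1$ subtractions) to reduce the coefficient array to one built from $\binom{2p-1}{\cdot},\binom{k-1}{\cdot},\dots,\binom{0}{\cdot}$; it then solves the resulting triangular system by induction on $s$, using Lucas in the form $\binom{2p-1}{h}\equiv(-1)^h$, the subset-of-a-subset identity $\binom{b}{a}\binom{c}{b}=\binom{c}{a}\binom{c-a}{b-a}$, and the evaluation of $(x-1)^{s-1}$ at $x=1$. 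This derivation is constructive: it produces the solution and establishes uniqueness simultaneously, since each $x_s$ is forced in turn. You instead split the two tasks: uniqueness via the nonvanishing mod $p$ of the Vandermonde-type determinant $\det\bigl(\binom{a_s}{t}\bigr)=\bigl(\prod_t 1/t!\bigr)\prod_{i<j}(a_j-a_i)$ with $a_s=2k-1-s$ consecutive (all factors lie in $[1,p-2]$ or are inverses of such, so this is legitimate since $t\le k-1\le p-2$); and existence by checking the claimed values directly, recognizing via Lucas that $\binom{2p-2+k}{t}\equiv\binom{k-2}{t}$ is exactly the missing $j=0$ term of $\sum_{j=0}^{k}(-1)^j\binom{k}{j}\binom{k+j-2}{t}$, which vanishes identically because $\binom{k+j-2}{t}$ is a polynomial in $j$ of degree $t<k$ and a $k$-th finite difference annihilates it. Your separate treatment of $k=1$ (which also covers $p=2$, where the digit computation in Lucas would otherwise be invalid) is the right precaution. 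The trade-off: your argument is shorter and conceptually cleaner but requires knowing the answer in advance; the paper's longer computation finds the answer without guessing. Both uses of Lucas and the degree bound $t\le k-1$ are essential in either version.
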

\begin{proof}
Let $X=(1,x_1,...,x_k)^T$ and
$$A:= \begin{pmatrix}
\binom{2p-2+k}{0} & \binom{2k-2}{0} & \binom{2k-3}{0} & \ldots & \binom{k-1}{0}\\
\binom{2p-2+k}{1} & \binom{2k-2}{1} & \binom{2k-3}{1} & \ldots & \binom{k-1}{1}\\
\ldots & \ldots & \ldots & \ldots & \ldots\\
\binom{2p-2+k}{k-1} & \binom{2k-2}{k-1} & \binom{2k-3}{k-1} & \ldots & \binom{k-1}{k-1}
\end{pmatrix}.$$
From (\ref{congrency}), we have
$$AX\equiv 0 \mod{p}.$$
Since $\binom{n}{0}=1$, for any $n$, we have
$$A = A_{1,0}= \begin{pmatrix}
\binom{2p-3+k}{0} & \binom{2k-3}{0} & \binom{2k-4}{0} & \ldots & \binom{k-2}{0}\\
\binom{2p-2+k}{1} & \binom{2k-2}{1} & \binom{2k-3}{1} & \ldots & \binom{k-1}{1}\\
\ldots & \ldots & \ldots & \ldots & \ldots\\
\binom{2p-2+k}{k-1} & \binom{2k-2}{k-1} & \binom{2k-3}{k-1} & \ldots & \binom{k-1}{k-1}
\end{pmatrix}.$$
By multiplying the first row of $A_{1,0}$ by $-1$, adding it to the second row of $A_{1,0}$ and using the property $\binom{n}{i}-\binom{n-1}{i-1}=\binom{n-1}{i}$, we obtain
$$A_{1,1}= \begin{pmatrix}
\binom{2p-3+k}{0} & \binom{2k-3}{0} & \binom{2k-4}{0} & \ldots & \binom{k-2}{0}\\
\binom{2p-3+k}{1} & \binom{2k-3}{1} & \binom{2k-4}{1} & \ldots & \binom{k-2}{1}\\
\ldots & \ldots & \ldots & \ldots & \ldots\\
\binom{2p-2+k}{k-1} & \binom{2k-2}{k-1} & \binom{2k-3}{k-1} & \ldots & \binom{k-1}{k-1}
\end{pmatrix}.$$
We can repeat this process $k-1$ times. That is, for $0 \leq i \leq k-2$, multiply row $i+1$ of $A_{1,i}$ by $-1$ and add the result to row $i+2$ to construct $A_{1,i+1}$. Then
$$A_{1,k-1}= \begin{pmatrix}
\binom{2p-3+k}{0} & \binom{2k-3}{0} & \binom{2k-4}{0} & \ldots & \binom{k-2}{0}\\
\binom{2p-3+k}{1} & \binom{2k-3}{1} & \binom{2k-4}{1} & \ldots & \binom{k-2}{1}\\
\ldots & \ldots & \ldots & \ldots & \ldots\\
\binom{2p-3+k}{k-1} & \binom{2k-3}{k-1} & \binom{2k-4}{k-1} & \ldots & \binom{k-2}{k-1}
\end{pmatrix}.$$
Repeating the above technique of row operations $\ell\leq k-1$ times, we obtain
$$A_{\ell,k-1}= \begin{pmatrix}
\binom{2p-2+k-\ell}{0} & \binom{2k-2-\ell}{0} & \binom{2k-3-\ell}{0} & \ldots & \binom{k-1-\ell}{0}\\
\binom{2p-2+k-\ell}{1} & \binom{2k-2-\ell}{1} & \binom{2k-3-\ell}{1} & \ldots & \binom{k-1-\ell}{1}\\
\ldots & \ldots & \ldots & \ldots & \ldots\\
\binom{2p-2+k-\ell}{k-1} & \binom{2k-2-\ell}{k-1} & \binom{2k-3-\ell}{k-1} & \ldots & \binom{k-1-\ell}{k-1}
\end{pmatrix}.$$
Ultimately, for $\ell=k-1$, we obtain
$$A_{k-1,k-1}= \begin{pmatrix}
\binom{2p-1}{0} & \binom{k-1}{0} & \binom{k-2}{0} & \ldots & \binom{0}{0}\\
\binom{2p-1}{1} & \binom{k-1}{1} & \binom{k-2}{1} & \ldots & \binom{0}{1}\\
\ldots & \ldots & \ldots & \ldots & \ldots\\
\binom{2p-1}{k-1} & \binom{k-1}{k-1} & \binom{k-2}{k-1} & \ldots & \binom{0}{k-1}
\end{pmatrix},$$ which is simply equal to $A$ when $k=1$.
Since $AX\equiv 0 \mod{p}$ and $\binom{n}{h}=0$ when $0\leq n < h$, it follows that $AX\equiv A_{k-1,k-1}X\equiv 0 \mod{p}$. That is, for $s\in [1,k],$
$$\binom{2p-1}{k-s} + \binom{k-1}{k-s}x_1 + \binom{k-2}{k-s}x_2 + \ldots + \binom{k-s}{k-s}x_s\equiv 0 \mod{p}.$$
We will now proceed by induction on $s\in [1,k]$. By Lucas's Theorem, $\binom{2p-1}{h}\equiv \binom{p-1}{h}\equiv (-1)^h \mod{p}$ for $0\leq h\leq p-1$. When $s=1$, we have $(-1)^{k-1}+x_1\equiv \binom{2p-1}{k-1} + \binom{k-1}{k-1}x_1 \equiv 0 \mod{p}$, which implies that $x_1 \equiv (-1)^k \equiv (-1)^k \binom{k}{k} \mod{p}$. We will now assume $s\geq 2$ and that $x_h \equiv (-1)^{k-h+1}\binom{k}{k-h+1} \mod{p}$ for all $h\in [1,s-1]$. Since $\binom{2p-1}{k-s} + \binom{k-1}{k-s}x_1 + \binom{k-2}{k-s}x_2 + \ldots + \binom{k-s}{k-s}x_s\equiv 0 \mod{p}$ and $\binom{2p-1}{k-s+1} + \binom{k-1}{k-s+1}x_1 + \binom{k-2}{k-s+1}x_2 + \ldots + \binom{k-s+1}{k-s+1}x_{s-1}\equiv 0 \mod{p}$,  it follows that
\begin{align}
\nn x_s & \equiv - \binom{2p-1}{k-s+1} - \binom{2p-1}{k-s} - \sum_{h=1}^{s-1}{\left( \binom{k-h}{k-s+1} + \binom{k-h}{k-s}\right)x_h} \\\label{bird4}
& =- \binom{2p}{k-s+1}- \sum_{h=1}^{s-1}{\binom{k-h+1}{k-s+1}x_h} \\
& \label{bird1} \equiv - \sum_{h=1}^{s-1}{(-1)^{k-h+1}\binom{k-h+1}{k-s+1} \binom{k}{k-h+1}} \\\label{bird2}
& = (-1)^{k+1} \binom{k}{k-s+1}  \sum_{h=1}^{s-1}{(-1)^{h-1}\binom{s-1}{s-h}} \\\label{bird3}
& = (-1)^{k-s+1} \binom{k}{k-s+1} \mod{p},
\end{align} where \eqref{bird4} follows in view of the binomial identity $\binom{n}{i}=\binom{n-1}{i}+\binom{n-1}{i-1}$,
where \eqref{bird1} follows in view of $1\leq s\leq k\leq p-1$ and the induction hyopthesis, where  \eqref{bird2} follows in view of the binomial identity $\binom{b}{a}\binom{c}{b}=\binom{c}{a}\binom{c-a}{b-a}$ for $0\leq a\leq b$, and where \eqref{bird3} follows by evaluating the polynomial identity $(x-1)^{s-1}=\sum_{h=1}^{s-1}x^{s-h}\binom{s-1}{s-h}(-1)^{h-1}+(-1)^{s-1}$ at $x=1$, which yields the desired value for $x_s$.
\end{proof}

\begin{lemma}\label{mzs-exists} Let $G=C_p \oplus C_p$ with $p$ prime, let $k\in [1,p-1]$ be an integer, and let $S\in \mathcal{F}(G)$ be a sequence of terms from $G$ with $|S|=\textup{D}(G)+k-1=2p-2+k$ and $0 \not\in \Sigma_{\leq \textup{D}(G)-k}(S)=\Sigma_{\leq 2p-1-k}(S)$. Then the following hold.
\begin{itemize}
\item[(a)]  For all $i\in [1,\textup{D}(G)-k]\cup[\textup{D}(G)+1,2\textup{D}(G)-2k+1]=[1,2p-1-k]\cup [2p, 4p-2k-1]$, we have $N^{i}(S)=0$.
\item[(b)] $N^{\textup{D}(G)}(S)\equiv k \mod{p}$. In particular, $S$ contains at least $k$  zero-sum subsequences of length $\textup{D}(G)=2p-1$, and any such zero-sum is minimal.
\item[(c)] If $k\geq 2$, then $\sigma(S)\neq 0$.
\end{itemize}
\end{lemma}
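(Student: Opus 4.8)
The plan is to prove the three parts in the order (a), (b), (c), using (a) as the engine for the other two.

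\emph{Part (a).} The lower range is immediate: the hypothesis $0\notin\Sigma_{\leq 2p-1-k}(S)$ says precisely that $S$ has no nonempty zero-sum subsequence of length at most $2p-1-k$, i.e. $N^i(S)=0$ for $i\in[1,2p-1-k]$. For the upper range, suppose $T$ is a zero-sum subsequence with $|T|=i\in[2p,4p-2k-1]$. Since $i>2p-1=\textup{D}(G)$ exceeds the maximal length of a minimal zero-sum in $G$, the sequence $T$ is not minimal, so it factors as $T=W_1\bdot\cdots\bdot W_r$ with $r\geq 2$ nonempty minimal zero-sum subsequences. The shortest factor then has length at most $i/2\leq (4p-2k-1)/2$, hence at most $2p-1-k$, giving a nonempty zero-sum of forbidden length --- a contradiction. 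The identical factorization argument shows that any zero-sum of length exactly $2p-1=\textup{D}(G)$ must be minimal (otherwise it would split into two pieces, the shorter of length at most $p-1\leq 2p-1-k$), which is the final assertion of (b).

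\emph{Part (b).} By part (a), the only lengths $i\in[0,|S|]$ at which $N^i(S)$ can be nonzero are $i=0$ (with $N^0(S)=1$), the target band $i\in[2p-k,2p-1]$, and --- only when $k>\tfrac{2p+1}{3}$ --- an intermediate gap band $i\in[4p-2k,|S|]$. To pin down the $k$ target counts $N^{2p-1}(S),\ldots,N^{2p-k}(S)$, I would lift $G$ into $\widetilde G=G\oplus C_p\cong C_p\oplus C_p\oplus C_p$ via $g\mapsto(g,1)$, so a subsequence $T\subseteq S$ lifts to sum $(\sigma(T),|T|)$, which is zero in $\widetilde G$ exactly when $T$ is zero-sum and $|T|\equiv 0\pmod p$. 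For each $t\in[0,k-1]$, appending $p-k+t$ copies of $(0,1)$ produces a sequence of length $3p-2+t\geq\textup{D}(\widetilde G)=3p-2$; applying Lemma \ref{5.5.8} and discarding, via part (a), every term forced to vanish collapses the alternating sum to a relation of shape
\[\binom{|S|}{t}+\sum_{s=1}^{k}\binom{2k-1-s}{t}\,x_s\equiv 0\pmod p,\]
where the $x_s$ are the (signed) target counts. Matching this to the system \eqref{congrency} --- which may require a unimodular manipulation of the binomial coefficients obtained from the lift --- Lemma \ref{con-system} then forces $N^{2p-1}(S)\equiv k\pmod p$. As $k\in[1,p-1]$ is nonzero mod $p$ and $N^{2p-1}(S)\geq 0$, this gives $N^{2p-1}(S)\geq k$.

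\emph{Part (c).} Granting (b), suppose $\sigma(S)=0$. Then $T\mapsto S\bdot T^{[-1]}$ is a length-reversing, zero-sum-preserving bijection on subsequences, so $N^i(S)=N^{|S|-i}(S)$ for all $i$; in particular $N^{2p-1}(S)=N^{k-1}(S)$. For $k\geq 2$ we have $k-1\in[1,2p-1-k]$, so part (a) gives $N^{k-1}(S)=0$, whence $N^{2p-1}(S)=0$, contradicting $N^{2p-1}(S)\equiv k\not\equiv 0\pmod p$. Thus $\sigma(S)\neq 0$.

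\emph{Main obstacle.} The hard point is the vanishing of the intermediate gap band. When $k>\tfrac{2p+1}{3}$, the appended sequences of length $3p$ in (b) generate zero-sum contributions at lengths $i\in[4p-2k,|S|]$ that part (a) does \emph{not} control, and this same band is exactly what must be excluded for the complement argument in (c). A zero-sum of such length necessarily splits as $W_1\bdot W_2$ with $W_1,W_2$ minimal zero-sums of lengths in $[2p-k,2k-2]$ (a short counting argument excludes three or more factors). I expect the crux to be showing that two such medium-length minimal zero-sums lying disjointly in $S$ must admit a common short subsum, i.e. a nonzero $x\in\Sigma_{\leq a}(W_1)\cap(-\Sigma_{\leq b}(W_2))$ with $a+b\leq 2p-1-k$, producing a forbidden short zero-sum; this requires genuine additive-combinatorial control of the short subsequence sums of near-extremal minimal zero-sums, presumably through Lemma \ref{bounding_lemma} together with the structure of length-$\textup{D}(G)$ minimal zero-sums in Theorem \ref{probB}.
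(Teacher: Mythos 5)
Parts (a) and (c) of your proposal are correct and essentially the paper's own arguments: (a) is the observation that a zero-sum of length in $[2p,4p-2k-1]$ splits into two nonempty zero-sums, the shorter of forbidden length $\leq 2p-1-k$, and (c) is the complementation argument using (a) and (b). The problem is part (b), which is the substance of the lemma, and there your proposal has two genuine gaps. First, the displayed congruence is not what your lift actually produces. Applying Lemma \ref{5.5.8} once to the lifted sequence of length $3p-2+t$ gives a relation whose coefficients are $\binom{p-k+t}{j}$ (the number of ways to choose $j=2p-i$ of the appended copies of $(0,1)$), i.e.\ with $t$ in the \emph{top} argument — not the coefficients $\binom{2k-1-s}{t}$ of the system \eqref{congrency}. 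Lemma \ref{con-system} solves one specific system; invoking it for a different coefficient matrix after an unspecified ``unimodular manipulation'' is not a proof. You would need to show your matrix is nonsingular mod $p$ and solve it explicitly, which is a genuinely separate computation. (The paper gets the coefficients $\binom{|S|-i}{t}$ of \eqref{congrency} by a different mechanism: it applies Lemma \ref{5.5.8} to \emph{every} subsequence $T$ of $S$ of length $|S|-t$ and sums the resulting identities, counting how often each zero-sum of $S$ is hit.)

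Second, and more seriously, you have misidentified the ``main obstacle.'' The gap band $i\in[4p-2k,2p-2+k]$ does not need to be shown empty, and your proposed route to emptiness — that two disjoint minimal zero-sums of length in $[2p-k,2k-2]$ must admit a common short subsum — is left entirely unproven; it is exactly the kind of inverse statement one cannot yet access before any structure of $S$ is known, and nothing in Lemma \ref{bounding_lemma} or Theorem \ref{probB} delivers it directly. The correct observation, which is how the paper's proof (and a correctly executed version of your lift) proceeds, is that the gap-band counts never need to be determined individually: since $p$ is odd and, by Lucas, $\binom{|S|-i}{t}\equiv\binom{|S|-(i+p)}{t}\pmod p$ for $t\leq p-1$, the unknowns $N^{i}(S)$ and $N^{i+p}(S)$ enter every congruence only through the single combination $(-1)^iN^i(S)+(-1)^{i+p}N^{i+p}(S)=(-1)^i\bigl(N^i(S)-N^{i+p}(S)\bigr)$. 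The resulting system in these paired unknowns is precisely \eqref{congrency}, and the unknown attached to $i=2p-1$ is uncontaminated because $N^{3p-1}(S)=0$ for the trivial reason that $3p-1>|S|$; Lemma \ref{con-system} then yields $N^{2p-1}(S)\equiv k\pmod p$ with no need to kill the gap band. As written, your part (b) does not close.
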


\begin{proof}
Recall that $$\textup{D}(G)=2p-1.$$

 \noindent (a): By hypothesis, $N^{i}(S)=0$ for all $i\in [1,\textup{D}(G)-k]$. If $i\in [\textup{D}(G)+1,2\textup{D}(G)-2k+1]$ and $N^{i}(S)\neq 0$, then $S$ has a zero-sum subsequence of length $i$, say $T$. Since $i > \textup{D}(G)$, then $T$ has a nonempty zero-sum subsequence of length at most $\textup{D}(G)$, say $R$. Then $R$ and $R^{[-1]}\bdot T$ are both nonempty, proper zero-sum subsequences of $S$, and one of them has length at most $\textup{D}(G)-k$, which is contrary to hypothesis.

\medskip

\noindent  (b): Let $T$ be a subsequence of $S$ of length $|T|=|S|-t\geq 2p-1$, where $t\in[0,k-1]$.

Suppose $k\leq\dfrac{2p+1}{3}$. Then $|S|=2p-2+k\leq 4p-2k-1=\textup D(G)-2k+1$. By Lemma \ref{5.5.8} and (a), we have
$$1+\sum_{i=2p-k}^{2p-1}{(-1)^iN^{i}(T)}\equiv 0 \mod{p}.$$
From this, we have
$$\sum_{T\mid S, |T|=|S|-t}{\left(1+\sum_{i=2p-k}^{2p-1}{(-1)^iN^{i}(T)}\right)}\equiv 0 \mod{p},  \quad\mbox{ for every $t\in [0,k-1]$}.$$
By counting the number of times each zero-sum subsequence of $S$ occurs in the above sum, we obtain
\begin{equation}\label{con_1}
\binom{|S|}{|T|} + \sum_{i=2p-k}^{2p-1}{(-1)^{i} \binom{|S|-i}{|T|-i}N^i(S)} = \binom{|S|}{t} + \sum_{i=2p-k}^{2p-1}{(-1)^{i} \binom{|S|-i}{t}N^i(S)}\equiv 0 \mod{p},
\end{equation}
for every $t\in [0,k-1]$. Let us next derive a similar congruence when $k\geq \frac{2p+2}{3}$.

Suppose $k\geq\dfrac{2p+2}{3}$. Then  $|S|=2p-2+k>4p-2k-1$,  so by Lemma \ref{5.5.8} and (a),
$$1+\sum_{i=2p-k}^{2p-1}{(-1)^iN^{i}(T)}+\sum_{i=4p-2k}^{2p-2+k}{(-1)^iN^{i}(T)}\equiv 0 \mod{p}.$$
Through the same process we used when $k\leq\dfrac{2p+1}{3}$, we obtain
$$\binom{|S|}{t} + \sum_{i=2p-k}^{2p-1}{(-1)^{i} \binom{|S|-i}{t} N^i(S)} + \sum_{i=4p-2k}^{2p-2+k}{(-1)^{i} \binom{|S|-i}{t} N^i(S)} \equiv 0 \mod{p}.$$
We have $t\leq k-1\leq p-1$, so by Lucas's Theorem, we find that $\binom{|S|-i}{t}\equiv \binom{p+|S|-i}{t} \mod{p}$ for  $i\in [4p-2k,2p-2+k]$. As a result, we obtain
$$\binom{|S|}{t} + \sum_{i=2p-k}^{2p-1}{(-1)^{i} \binom{|S|-i}{t} N^i(S)} + \sum_{i=4p-2k}^{2p+k-2}{(-1)^{i} \binom{p+|S|-i}{t} N^i(S)} \equiv 0 \mod{p}.$$
By re-indexing the third summation, we obtain
$$\binom{|S|}{t} + \sum_{i=2p-k}^{2p-1}{(-1)^{i} \binom{|S|-i}{t} N^i(S)} + \sum_{i=3p-2k}^{p+k-2}{(-1)^{i+p} \binom{|S|-i}{t} N^{i+p}(S)} \equiv 0 \mod{p}.$$
Since $k<p$, then $2p-k < 3p-2k$ and $p+k-2<2p-1$,  so we obtain
 \begin{equation}\label{con_2}
\begin{split}
\binom{|S|}{t} & + \sum_{i=2p-k}^{3p-2k-1}{(-1)^{i} \binom{|S|-i}{t} N^i(S)} \\
 & + \sum_{i=3p-2k}^{p+k-2}{ \binom{|S|-i}{t}  \left( (-1)^{i} N^i(S) + (-1)^{i+p} N^{i+p}(S)\right)} \\
 & +\sum_{i=p+k-1}^{2p-1}{(-1)^{i}\binom{|S|-i}{t} N^i(S)} \equiv 0 \mod{p},
\end{split}
\end{equation}
for every $t\in [0,k-1]$.

In view of \eqref{con_1} and \eqref{con_2},
we can apply Lemma \ref{con-system}  to yield $(-1)^{2p-1} N^{2p-1} (S)\equiv -\binom{k}{1}\mod p$. Since $2p-1$ is odd, then $N^{2p-1} (S)\equiv k\mod p$. Since $k\not\equiv 0 \mod{p}$ and $N^{2p-1} (S) \geq 0$, then $N^{2p-1} (S) \geq k$. Lastly, if a zero-sum subsequence of $S$ of length $2p-1$ was not minimal, then $S$ would contain a subsequence of length at most $p\leq 2p-1-k$, which is contrary to hypothesis.

\medskip

 \noindent (c): Assume by contradiction that part (c) is false, that is, $\sigma(S)=0$. Since $k\not\equiv 0 \mod p$, then $N^{2p-1} (S) \geq 1$ by (b), so $S$ has a zero-sum subsequence of length $2p-1$, which we call $T$. Since $S$ is a zero-sum sequence, then $T^{[-1]}\bdot S$ will be a zero-sum subsequence of $S$ of length $|S|-|T|=k-1\in [1,p-2]\subseteq [1,2p-1-k]$ (for $k\geq 2$), which is contrary to hypothesis.
\end{proof}

\begin{lemma}\label{p-1_repeating} Let $G=C_p \oplus C_p$ with $p$ prime, let $k\in [1,p-2]$ be an integer, and let $S\in \mathcal{F}(G)$ be a sequence of terms from $G$ with $|S|=\textup{D}(G)+k-1=2p-2+k$ and $0 \not\in \Sigma_{\leq \textup{D}(G)-k}(S)=\Sigma_{\leq 2p-1-k}(S)$. If $(e_1,e_2)$ is a basis for $G$ such that $S=e_1^{[p-1]}\boldsymbol{\cdot} e_2^{[p-1]}\boldsymbol{\cdot} T$, then $S=e_1^{[p-1]}\boldsymbol{\cdot} e_2^{[p-1]}\boldsymbol{\cdot} (e_1 +e_2)^{[k]}$.
\end{lemma}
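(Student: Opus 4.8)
The plan is to exploit the rigidity forced by the $2(p-1)$ repeated terms $e_1^{[p-1]}\bdot e_2^{[p-1]}$. Write $S = e_1^{[p-1]}\bdot e_2^{[p-1]}\bdot T$, so that $|T| = |S| - 2(p-1) = k$, and record each term of $T$ in coordinates as $a e_1 + b e_2$ with $a,b\in[0,p-1]$. I would first pin down the individual terms. If some term had $a=0$ but was nonzero, then together with the $p-b\le p-1$ available copies of $e_2$ it would form a zero-sum of length $1+(p-b)\le p\le 2p-1-k$ (using $k\le p-2$), contradicting the hypothesis; the same applies with $e_1,e_2$ interchanged, and the zero term is excluded outright. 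Hence every term of $T$ is $a_ie_1+b_ie_2$ with $a_i,b_i\in[1,p-1]$. Completing a single such term using $p-a_i$ copies of $e_1$ and $p-b_i$ copies of $e_2$ gives a zero-sum of length $2p+1-a_i-b_i$, which must exceed $2p-1-k$; this forces $a_i+b_i\le k+1$, and in particular $a_i,b_i\le k\le p-2$.

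Next I would translate the hypothesis into pure arithmetic. Every zero-sum subsequence of $S$ arises by choosing a subset $I\subseteq[1,k]$ of terms of $T$ together with exactly $c_1(I):=(-A_I)\bmod p$ copies of $e_1$ and $c_2(I):=(-B_I)\bmod p$ copies of $e_2$, where $A_I=\sum_{i\in I}a_i$ and $B_I=\sum_{i\in I}b_i$; its length is $|I|+c_1(I)+c_2(I)$, and these counts are always realizable since $c_1(I),c_2(I)\in[0,p-1]$. Thus $0\notin\Sigma_{\le 2p-1-k}(S)$ is \emph{equivalent} to the family of inequalities $|I|+c_1(I)+c_2(I)\ge 2p-k$ for all nonempty $I$, i.e.\ (for subsets with $A_I,B_I\not\equiv 0\bmod p$, the degenerate cases being handled analogously) $(A_I\bmod p)+(B_I\bmod p)\le |I|+k$. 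The complementary vanishing $N^i(S)=0$ for $i\in[2p,4p-2k-1]$ from Lemma~\ref{mzs-exists}(a) pushes the admissible lengths $f(I):=|I|+c_1+c_2$ below $2p$ in the relevant range, squeezing the subset-sum residues into a narrow two-sided window and thereby adding further rigidity.

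The conclusion I am aiming for is $\sum a_i=\sum b_i=k$. Indeed, applied to the full set $I=[1,k]$ \emph{in the absence of wraparound} (that is, when $\sum a_i\le p-1$ and $\sum b_i\le p-1$), the displayed inequality reads $\sum_{i}(a_i+b_i)\le 2k$; since each $a_i+b_i\ge 2$, this collapses to $a_i=b_i=1$ for every $i$, which is exactly $T=(e_1+e_2)^{[k]}$. So everything reduces to excluding wraparound.

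The hard part is precisely this wraparound phenomenon: a priori $\sum a_i$ or $\sum b_i$ may exceed $p$, and then the full-set inequality constrains only residues, not the true sums (one sees already for small cases that the violating subset causing a contradiction may itself wrap past a multiple of $p$, rather than being a clean sub-box sum). What must be shown is that any configuration with some $a_i+b_i\ge 3$ admits a subset $I$ whose coordinate partial sums $A_I,B_I$ land in a forbidden residue window, producing a zero-sum of length $\le 2p-1-k$. I expect to locate such an $I$ by a greedy/extremal analysis of the partial sums modulo $p$, using that each increment $a_i,b_i\le p-2$ never overshoots a full period. Should the direct selection prove unwieldy, an alternative route is to invoke Lemma~\ref{mzs-exists}(b) to produce a length-$(2p-1)$ minimal zero-sum, apply the Property~B rigidity of Theorem~\ref{probB} to force the terms of $T$ meeting such a zero-sum to share the coordinate $1$ in one direction, and then finish with Lemma~\ref{car1}. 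In either approach, exhibiting the violating subset — equivalently, ruling out genuine wraparound — is the crux of the argument.
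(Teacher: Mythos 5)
Your setup is sound and matches the paper's in substance: the translation of $0\notin\Sigma_{\le 2p-1-k}(S)$ into the inequalities $|I|+c_1(I)+c_2(I)\ge 2p-k$, sharpened by the extra exclusions of Lemma~\ref{mzs-exists}(a) and (c), is exactly the information the paper exploits, and your treatment of the no-wraparound case and of individual terms is correct. But the proof is not complete: the step you yourself label ``the crux'' --- showing that any configuration with some $a_i+b_i\ge 3$ admits a subset $I$ landing in the forbidden window --- is where all the work lies, and neither of your proposed routes is carried out or clearly viable. A ``greedy/extremal analysis of the partial sums modulo $p$'' is a hope, not an argument; and the fallback via Lemma~\ref{car1} does not apply here, since that lemma requires all non-$e_1$ terms of $S$ to lie in the single coset $e_2+\la e_1\ra$, which is not part of your hypotheses (the terms of $T$ may a priori have arbitrary $e_2$-coefficients).

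The paper closes exactly this gap with one further idea. Define $\phi:G\to\Z/p\Z$ by $xe_1+ye_2\mapsto \overline{x+y-1}$, where $\overline{n}\in[1,p]$ is the representative of $n$ modulo $p$. Your window condition, applied to an \emph{arbitrary} nonempty subsequence $T'$ of $T$ (not only to single terms and the full set), says precisely that $\sigma(\phi(T'))\in[1,k]\pmod p$; hence $\Sigma(\phi(T))\subseteq[1,k]$, so $\phi(T)$ is a zero-sum free sequence over $\Z/p\Z$ with $|\Sigma(\phi(T))|\le k=|\phi(T)|$. Lemma~\ref{bounding_lemma}, i.e.\ $|\Sigma(U)|\ge |U|+|\supp(U)|-1$ for zero-sum free $U$, then forces $|\supp(\phi(T))|=1$, say $\phi(T)=g^{[k]}$; the set $\{g,2g,\dots,kg\}$ must coincide with the progression $[1,k]$, and since $-1\notin[1,k]$ (using $k\le p-2$) this pins $g=1$. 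Each term $\alpha e_1+\beta e_2$ of $T$ then satisfies $\alpha+\beta\in\{2,p+2\}$, and the case $\alpha+\beta=p+2$ is eliminated by your own single-term length computation. Without this projection-plus-subset-sum-bound step (or some equivalent replacement), the wraparound case remains open, so as written your proposal has a genuine gap.
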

\begin{proof}  If $k=1$, then $|S|=2p-1=\textup D(G)$ with $0\notin \Sigma_{\leq 2p-2}(S)$ ensures that $S$ is a minimal zero-sum sequence of length $2p-1$, forcing $T=e_1+e_2$, as desired. Therefore we can assume $k\geq 2$.

Let $\overline{n}\in [1,p]$ be the least positive integer congruent to $n$ modulo $p$. Let $\phi :G\rightarrow \Z/p\Z$ be defined by $xe_1 + ye_2 \mapsto \overline{x+y-1}\mod p$. Let $T'={\prod}^{\bullet}_{i\in[1,|T'|]}{(x_i e_1 +y_i e_2)}$, where $x_i, y_i \in [1,p]$, be an arbitrary nonempty subsequence of $T$. Then $$\sigma(\phi(T'))\equiv \sum_{i\in[1,|T'|]}{\overline{x_i+y_i-1}} \equiv \overline{\sum_{i\in[1,|T'|]}{x_i}} + \overline{\sum_{i\in[1,|T'|]}{y_i}} - |T'| \mod{p}.$$
Since $S':=e_1^{[p-\overline{\sum_{i\in[1,|T'|]}{x_i}}]} \boldsymbol{\cdot} e_2^{[p-\overline{\sum_{i\in[1,|T'|]}{y_i}}]} \boldsymbol{\cdot} T'$ is a nonempty zero-sum subsequence of $S$, then by Lemma \ref{mzs-exists} parts (a) and (c), we have
$$|S'|=2p-\overline{\sum_{i\in[1,|T'|]}{x_i}} - \overline{\sum_{i\in[1,|T'|]}{y_i}} + |T'| \in [2p-k,2p-1]\cup [4p-2k, 2p-3+k].$$
From this, we find that
$$\sigma(\phi (T'))\equiv \overline{\sum_{i\in[1,|T'|]}{x_i}} + \overline{\sum_{i\in[1,|T'|]}{y_i}} - |T'| \in [1,k]\cup [3-k,2k-2p] \equiv [1,k]\cup [p+3-k,2k-p] \mod{p}.$$
Since $[p+3-k,2k-p]\subseteq[4,k-1]$, then $\sigma(\phi (T')) \in  [1,k] \mod{p}$, and as $T'$ was an arbitrary nonempty subsequence of $T$, this shows that $$\Sigma(\phi (T))\subseteq [1,k]\mod p.$$
Since $k\leq p-2$, then $-1,0 \not\in \Sigma(\phi (T))$. Thus we can apply  Lemma \ref{bounding_lemma} to  obtain $|\textup{supp}(\phi(T))|=1$, say $\phi(T)=g^{[k]}$ with $g\neq 0$.  As a result $\Sigma(\phi (T))=\{g,2g,\ldots,kg\}\subseteq [1,k]\mod  p$ is an arithmetic progression with difference $g$ and length $k\in [2,p-2]$, and thus also equal to the arithmetic progression $[1,k]$ with difference $1$ which contains it. Since an arithmetic progression with difference $g$ and length from $[2,\ord(g)-2]$ has its difference unique up to sign (as is easily verified), it follows that $g=\pm 1$, and as  $-1\notin \Sigma(\phi(T))$, we are left to conclude that $g=1$, meaning $\phi(T)=1^{[k]}$.

\medskip

So for any term of $T$, say $\alpha e_1 +\beta e_2$ where $\alpha , \beta \in [1,p]$, we have $\alpha + \beta -1 \equiv 1 \mod p$. Due to the bounds on $\alpha$ and $\beta$, it follows that $\alpha + \beta  = 2$ or $\alpha + \beta =p+2$.  If $\alpha + \beta =p+2$, then  $e_1^{[p-\alpha]} \boldsymbol{\cdot} e_2^{[p-\beta]} \boldsymbol{\cdot} (\alpha e_1 + \beta e_2)$ is a zero-sum subsequence of $S$ of length $2p-\alpha - \beta +1 = p -1\leq 2p-1-k$, contrary to hypothesis. Therefore $\alpha + \beta  = 2$, which forces $\alpha = \beta =1$ and $T=(e_1 + e_2)^{[k]}$.
\end{proof}

\begin{lemma}\label{mzs_3d} Let $G=C_p \oplus C_p \oplus C_p$ with $p$ prime and let $S\in \mathcal{F}(G)$ be a minimal zero-sum sequence of length $\textup D(G)=3p-2$. If there is an $e_1 \in G$ such that $\vp_{e_1}(S) \geq p-1$, then there exists $e_2, e_3 \in G$ such that $(e_1,e_2,e_3)$ is a basis of $G$ and $S$ has the following form:
$$S=e_1^{[p-1]}\boldsymbol{\cdot} {\prod}^{\bullet}_{i\in[1,p-1]}{(\alpha_ie_1+e_2)}\boldsymbol{\cdot} {\prod}^{\bullet}_{i\in[1,p]}{(\beta_ie_1+\gamma_ie_2+e_3)},$$
with $\alpha_i, \beta_i, \gamma_i \in [0,p-1]$ and $\sum_{i=1}^{p-1}{\alpha_i}  + \sum_{i=1}^{p}{\beta_i} \equiv \sum_{i=1}^{p}{\gamma_i} \equiv 1 \mod{p}$.
\end{lemma}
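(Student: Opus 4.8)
The plan is to reduce to the rank‑two case by quotienting out $\langle e_1\rangle$ and then invoke the structural result Theorem \ref{probB}. First I would pin down the multiplicity of $e_1$. Since $S$ is a minimal zero-sum sequence of length $3p-2>1$, no term can equal $0$, so $e_1\neq 0$ and hence $\ord(e_1)=p$. If $\vp_{e_1}(S)\geq p$, then $e_1^{[p]}$ would be a nonempty zero-sum subsequence, proper since $p<3p-2$, contradicting minimality; so the hypothesis $\vp_{e_1}(S)\geq p-1$ forces $\vp_{e_1}(S)=p-1$. Writing $S=e_1^{[p-1]}\bdot S'$, I obtain a subsequence $S'$ of length $2p-1$ containing no copy of $e_1$, with $\sigma(S')=-\sigma(e_1^{[p-1]})=e_1$.

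The central step is to pass to $\overline{G}:=G/\langle e_1\rangle\cong C_p^2$ via the canonical projection $\pi$, and to show that $\pi(S')$ is a \emph{minimal} zero-sum sequence of length $\textup{D}(\overline{G})=2p-1$. That $\pi(S')$ is zero-sum is immediate from $\sigma(S')=e_1\in\langle e_1\rangle$. Minimality is the main obstacle. Suppose some nonempty proper subsequence $R$ of $S'$ had $\pi(R)$ zero-sum, i.e. $\sigma(R)=c\,e_1$ for some $c\in[0,p-1]$. Choosing $m\in[0,p-1]$ with $m\equiv -c\pmod p$, the sequence $R\bdot e_1^{[m]}$ is a zero-sum subsequence of $S$ (here I use $m\leq p-1=\vp_{e_1}(S)$), and it is nonempty and proper in $S$ because $R$ omits some non‑$e_1$ term of $S'$. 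This contradicts minimality of $S$, so $\pi(S')$ is indeed a minimal zero-sum sequence of length $\textup{D}(\overline{G})$.

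Next I would apply Theorem \ref{probB} to $\pi(S')$ to obtain a basis $(\overline{e_2},\overline{e_3})$ of $\overline{G}$ with $\pi(S')=\overline{e_2}^{[p-1]}\bdot{\prod}^{\bullet}_{i\in[1,p]}(\gamma_i\overline{e_2}+\overline{e_3})$, where $\gamma_i\in[0,p-1]$ and $\sum\gamma_i\equiv 1\pmod p$. Lifting $\overline{e_2},\overline{e_3}$ to arbitrary preimages $e_2,e_3\in G$ yields a basis $(e_1,e_2,e_3)$ of $G$: independence follows by projecting any relation $a e_1+b e_2+c e_3=0$ into $\overline{G}$, which forces $b\equiv c\equiv 0$ and then $a\equiv 0$. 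Since $S'$ contains no copy of $e_1$, each term of $S'$ lifts within its residue class so that the coefficient of $e_1$ stays in $[0,p-1]$: the $p-1$ terms lying over $\overline{e_2}$ become $\alpha_i e_1+e_2$, and the $p$ terms lying over $\gamma_i\overline{e_2}+\overline{e_3}$ become $\beta_i e_1+\gamma_i e_2+e_3$, with $\alpha_i,\beta_i\in[0,p-1]$. This produces the claimed shape of $S$.

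Finally, the two congruences are read off from $\sigma(S)=0$. Comparing $e_2$‑coordinates gives $(p-1)+\sum\gamma_i\equiv 0$, i.e. $\sum\gamma_i\equiv 1$, consistent with Theorem \ref{probB}; the $e_3$‑coordinate gives $p\equiv 0$ automatically; and comparing $e_1$‑coordinates gives $(p-1)+\sum\alpha_i+\sum\beta_i\equiv 0$, i.e. $\sum_{i=1}^{p-1}\alpha_i+\sum_{i=1}^{p}\beta_i\equiv 1\pmod p$. I expect the genuine difficulty to be concentrated in the minimality argument for $\pi(S')$; the remaining steps are bookkeeping once the quotient reduction is set up.
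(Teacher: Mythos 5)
Your proposal is correct and follows essentially the same route as the paper: force $\vp_{e_1}(S)=p-1$, project the remaining $2p-1$ terms modulo $\langle e_1\rangle$, show the projection is a minimal zero-sum sequence of length $\textup D(C_p\oplus C_p)$ by completing any shorter relation with copies of $e_1$, and then apply Theorem \ref{probB} and lift. The only cosmetic difference is that the paper works with a direct complement $H$ satisfying $G=\langle e_1\rangle\oplus H$ rather than the quotient $G/\langle e_1\rangle$, which changes nothing of substance.
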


\begin{proof}
Since $S$ is a minimal zero-sum of length $3p-2>p$, we must have $\vp_{e_1}(S)\leq p-1$, whence $\vp_{e_1}(S)=p-1$.
Since $e_1 \neq 0$, there exists an $ H \leq G$ such that $G = \left< e_1 \right> \oplus H$,  so $S$ will have the form
\begin{equation}\label{first-form-3d}
S=e_1^{[p-1]}\boldsymbol{\cdot} {\prod}^{\bullet}_{i\in[1,2p-1]}{(x_ie_1+h_i)}
\end{equation}
where $h_i \in H$, $x_i\in [0,p-1]$ and, clearly, $\sum_{i=1}^{2p-1}{x_i} \equiv 1 \mod{p}$. Consider the sequence  $S' = {\prod}^{\bullet}_{i\in[1,2p-1]}{h_i}$. Since $S$ is zero-sum, it follows that $S'$ is zero-sum. Moreover, if $S'$ has a proper, nonempty zero-sum $T'$, then the corresponding subsequence of  ${\prod}^{\bullet}_{i\in[1,2p-1]}{(x_ie_1+h_i)}$ will be a proper, nonempty subsequence whose sum lies in $\la e_1\ra$, which can be made into a proper, nonempty zero-sum subsequence of $S$ by concatenating an appropriate number of terms from $e_1^{[p-1]}$. Since this would contradict that $S$ is a minimal zero-sum, we conclude that $S'$ is a minimal zero-sum of length $2p-1$ with terms from $H\cong C_p\oplus C_p$. Then by Theorem \ref{probB}, it follows that $S'$ has the form $$S'=e_2^{[p-1]}\boldsymbol{\cdot} {\prod}^{\bullet}_{i\in[1,p]}{(\gamma_ie_2+e_3)}$$
with $\gamma_i \in [0,p-1]$ and $\sum_{i=1}^{p}{\gamma_i} \equiv 1 \mod{p}$, for some basis $(e_2,e_3)$ of $H$. By re-indexing $S'$, we have that $h_i =e_2$ for $i\in [1,p-1]$ and $h_i = \gamma_{i-p+1}e_2 +e_3$ for $i\in[p,2p-1]$. By setting $\alpha_i =x_i$ for $i\in [1,p-1]$ and $\beta_i=x_{i+p-1}$ for $i\in[1,p]$, we can rewrite \eqref{first-form-3d}, and $S$ will have the form
\begin{equation}\label{final-form-3d}
S=e_1^{[p-1]}\boldsymbol{\cdot} {\prod}^{\bullet}_{i\in[1,p-1]}{(\alpha_ie_1+e_2)}\boldsymbol{\cdot} {\prod}^{\bullet}_{i\in[1,p]}{(\beta_ie_1+\gamma_ie_2+e_3)}.
\end{equation}
Since $(e_1,e_2,e_3)$ is a basis of $G$ due to $(e_2,e_3)$ being a basis of $H$, $\sum_{i=1}^{n}{\gamma_i} \equiv 1 \mod{p}$, and $\sum_{i=1}^{p-1}{\alpha_i}  + \sum_{i=1}^{p}{\beta_i} = \sum_{i=1}^{2p-1}{x_i} \equiv 1 \mod{p}$, then (\ref{final-form-3d}) has the desired properties.
\end{proof}

\begin{lemma}\label{height-of-S} Let $G=C_p \oplus C_p$ with $p$ prime, let $k\in [2,p-2]$ be an integer, and let $S\in \mathcal{F}(G)$ be a sequence of terms from $G$ with $|S|=\textup D(G)+k-1=2p-2+k$ and $0 \not\in \Sigma_{\leq \textup{D}(G)-k}(S)=\Sigma_{\leq 2p-1-k}(S)$. If $S$ has the form
$$S=e_1^{[p-1]}\boldsymbol{\cdot} {\prod}^{\bullet}_{i\in[1,\ell]}{(a_ie_1+e_2})\boldsymbol{\cdot} {\prod}^{\bullet}_{i\in[1,v]}{(b_ie_1+x_ie_2}),$$
where $(e_1, e_2)$ is a basis of $G$, \ $\ell \geq p$, \ $a_i,b_i\in [1,p]$, \ $x_i\in [2,p-1]$, and $\sum_{i=1}^p{a_i}\equiv 1 \mod{p}$,  then $\textup h\left({\prod}^{\bullet}_{i\in[1,\ell]}{(a_ie_1+e_2)} \right)=p-1$.
\end{lemma}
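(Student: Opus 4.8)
The plan is to pass to the group $C_p^3$ and invoke the structural Lemma \ref{mzs_3d}. Write $T_1={\prod}^{\bullet}_{i\in[1,\ell]}(a_ie_1+e_2)$. The bound $\textup h(T_1)\le p-1$ is immediate: if a value occurred $p$ times in $T_1$, those $p$ equal terms would sum to $0$, giving a zero-sum subsequence of length $p\le 2p-1-k$ (as $k\le p-2$), against the hypothesis; so it suffices to exhibit a value of $T_1$ of multiplicity $p-1$. To set up the lift, regard $C_p^3=G\oplus C_p$ and let $\widehat S={\prod}^{\bullet}_{g\mid S}(g,1)$ be obtained by appending a $1$ in the new coordinate to every term. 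A subsequence of $\widehat S$ is zero-sum exactly when the corresponding $T\mid S$ has $\sigma(T)=0$ and $|T|\equiv 0\bmod p$. Since $|S|=2p-2+k<3p$, the only available lengths divisible by $p$ are $p$ and $2p$, and Lemma \ref{mzs-exists}(a) gives $N^{p}(S)=N^{2p}(S)=0$ (indeed $p\in[1,2p-1-k]$ and $2p\in[2p,4p-2k-1]$ since $k\le p-2$). Hence $S$ has no nonempty zero-sum of length divisible by $p$, so $\widehat S$ is zero-sum free over $C_p^3$.

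Next I would extend $\widehat S$ to a minimal zero-sum $W$ of length $\textup D(C_p^3)=3p-2$, taking the $p-k$ appended terms to have $e_2$-coordinate $\ne 1$ in the $G$-component. Granting this, $\widehat e_1:=(e_1,1)$ still has multiplicity exactly $p-1$ in $W$: it already has this multiplicity in $\widehat S$, and a minimal zero-sum of length $3p-2>p=\ord(\widehat e_1)$ cannot contain $\widehat e_1^{[p]}$. Thus Lemma \ref{mzs_3d} applies with repeated element $\widehat e_1$ and supplies a basis $(\widehat e_1,\widehat e_2,\widehat e_3)$ with $W=\widehat e_1^{[p-1]}\bdot{\prod}^{\bullet}_{i\in[1,p-1]}(\alpha_i\widehat e_1+\widehat e_2)\bdot{\prod}^{\bullet}_{i\in[1,p]}(\beta_i\widehat e_1+\gamma_i\widehat e_2+\widehat e_3)$. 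Projecting $W$ along $\la\widehat e_1\ra$ onto $\la\widehat e_2,\widehat e_3\ra\cong C_p^2$, the copies of $\widehat e_1$ vanish and the remaining $2p-1$ terms form $S'=\widehat e_2^{[p-1]}\bdot{\prod}^{\bullet}_{i\in[1,p]}(\gamma_i\widehat e_2+\widehat e_3)$, in which $f_1:=\widehat e_2$ carries multiplicity $p-1$ and the other $p$ terms lie on the coset $\mu:=\widehat e_3+\la\widehat e_2\ra$.

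I would then read off the height by a finite-geometry argument. The functional $\psi(ae_1+be_2,c)=b$ kills $\widehat e_1$, hence factors as $\psi=\overline\psi\circ(\text{projection})$ for a linear functional $\overline\psi$ on $\la\widehat e_2,\widehat e_3\ra$. Every term of $T_1$ satisfies $\psi=1$, so their images lie on the affine line $\lambda:=\overline\psi^{-1}(1)$, and distinct values of $T_1$ give distinct images (two terms of $e_2$-coordinate $1$ differ by an element of $\la\widehat e_1\ra$ only if equal). By the chosen extension the images of the appended terms, and of the $T_2$-terms (whose $e_2$-coordinate is $x_i\ge 2$), all avoid $\lambda$; hence the terms of $S'$ lying on $\lambda$ are exactly the images of $T_1$. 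One checks $f_1\in\lambda$: otherwise the $\ell\ge p$ images of $T_1$ would be forced onto the at most one point $\lambda\cap\mu$, making $T_1$ a single value of multiplicity $\ell\ge p$ against $\textup h(T_1)\le p-1$ (the degenerate case $\lambda=\mu$ being excluded using the $T_2$-terms and $k\ge2$). Since the $p-1$ copies of $f_1$ all lie on $\lambda$, they all come from $T_1$, so some value of $T_1$ has multiplicity $p-1$, giving $\textup h(T_1)=p-1$.

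The main obstacle is the controlled extension: enlarging $\widehat S$ to a minimal zero-sum of length exactly $3p-2$ using only appended terms of $e_2$-coordinate $\ne 1$. I would extend $\widehat S$ within this restricted set of terms to a zero-sum free sequence of maximal length $3p-3$ and then adjoin the unique completing term $-\sigma(\cdot)$, which automatically produces a minimal zero-sum; the delicate points are checking that the greedy extension never gets stuck inside the admissible set and that the forced completing term again has $e_2$-coordinate $\ne1$. Both should follow from the available freedom, since $|\widehat S|=2p-2+k$ lies well below $3p-3$ and there are $p^2(p-1)$ admissible terms; indeed, it is precisely this control that upgrades the conclusion from "$T_1$ has at most two distinct values" (which the uncontrolled projection already yields) to the sharp statement $\textup h(T_1)=p-1$.
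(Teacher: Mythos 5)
Your overall strategy is the same as the paper's: lift $S$ to $C_p^{3}$ by appending a third coordinate equal to $1$, complete the lifted sequence to a minimal zero-sum sequence of length $\textup D(C_p^3)=3p-2$, apply Lemma \ref{mzs_3d} with the repeated element $(e_1,1)$, and then read off the multiplicity $p-1$ by projecting modulo the subgroup generated by $(e_1,1)$. Your preliminary observations are correct: $\textup h(T_1)\le p-1$ is immediate, and $\widehat S$ is indeed zero-sum free in $C_p^3$ by Lemma \ref{mzs-exists}(a). The affine-line endgame would also work \emph{if} you had the controlled completion you ask for.

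But that completion is exactly the hard point, and your justification for it does not hold up. First, the proposed mechanism --- greedily extend $\widehat S$ inside the set of terms with $e_2$-coordinate $\ne 1$ until reaching a zero-sum free sequence of length $3p-3$, then adjoin $-\sigma(\cdot)$ --- fails in principle: an inclusion-maximal zero-sum free sequence need not have length $\textup D(G)-1$ (already in $C_7$ the sequence $1\bdot 2\bdot 3$ is zero-sum free with $\Sigma$ equal to all of $C_7\setminus\{0\}$, hence maximal at length $3<6$), so ``there are many admissible terms'' is not an argument that the greedy process reaches length $3p-3$ rather than getting stuck. Second, and more seriously, the extra requirement that every appended term have $e_2$-coordinate $\ne 1$ may be impossible to arrange: once the first $p-k-1$ appended terms are chosen, the final term is forced to be minus the sum of everything else, and in the paper's explicit completion (append $(-e_3)^{[p-k-1]}$ and then $-\sigma(S)-(2k-1)e_3$) this forced term has $e_2$-coordinate $-(\ell+\sum_{i}x_i)\bmod p$, which can perfectly well equal $1$. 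This is not a removable nuisance: the situation where the completing term projects onto $f_2$ is precisely the hard case $\pi(S_5')=f_2$ that occupies the entire second half of the paper's proof and is resolved by a further change of basis and a contradiction via Theorem \ref{probB}, not by engineering the completion to avoid it. Consequently your key assertion that ``the terms of $S'$ lying on $\lambda$ are exactly the images of $T_1$,'' on which the multiplicity count rests, is unproven. To close the gap you need either an explicit completion together with a proof that it is a minimal zero-sum sequence of length $3p-2$ (the analogue of the paper's Claim A, which requires the case analysis using Lemma \ref{mzs-exists}(a),(c)), or a separate argument handling the case where an appended term does land on the line $\lambda$.
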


\begin{proof}

If $v=0$, then we can apply Lemma \ref{car1} to complete the proof,  so we will assume $v\geq 1$.
Let $G'=C_p\oplus C_p \oplus C_p$ and let $(e_1, e_2, e_3)$ be a basis of $G'$. Let $\phi : G  \rightarrow G'$ be the map defined by $xe_1 + ye_2 \mapsto xe_1 + ye_2+e_3$ and let
\begin{equation}\label{3d-form1}
S' =\phi(S)\boldsymbol{\cdot} (-e_3)^{[p-k-1]} \boldsymbol{\cdot} ( -\sigma(S)-(2k-1)e_3) =S'_1 \boldsymbol{\cdot} S'_2 \boldsymbol{\cdot} S'_3 \boldsymbol{\cdot} S'_4 \boldsymbol{\cdot} S'_5,
\end{equation}
where
\begin{align}\label{3d-part1}
&S'_1 = \phi(e_1^{[p-1]})=(e_1+e_3)^{[p-1]},
\\\label{3d-part2}
&S'_2 = \phi\left({\prod}^{\bullet}_{i\in[1,\ell]}{(a_ie_1+e_2})\right)=
{\prod}^{\bullet}_{i\in[1,\ell]}{(a_ie_1+e_2+e_3}),
\\\label{3d-part3}
&S'_3 = \phi\left({\prod}^{\bullet}_{i\in[1,v]}{(b_ie_1+x_ie_2})\right)=
{\prod}^{\bullet}_{i\in[1,v]}{(b_ie_1+x_ie_2+e_3)},
\\
&S'_4 = (-e_3)^{[p-k-1]}, \quad \und
\\\label{3d-part5}
&S'_5 = -\sigma(S)-(2k-1)e_3=-\left(\sum_{i=1}^{\ell}{a_i} + \sum_{i=1}^v{b_i} -1\right)e_1 - \left(\ell+\sum_{i=1}^v{x_i}\right)e_2 -(2k-1)e_3.
\end{align}

\textbf{Claim A:}  $S'$ is a minimal zero-sum sequence of length $3p-2$.

\begin{proof}[Proof of Claim A]
 Since $\ell +v+p-1=|S|=2p-2+k$, then $|S'|=3p-2$. Also,
\begin{equation*}
\begin{split}
\sigma(S') & =(\sigma(S)+(2p-2+k)e_3)-(p-k-1)e_3-\sigma(S)-(2k-1)e_3  = pe_3  = 0,
\end{split}
\end{equation*} so $S'$ is zero-sum.
Furthermore, by Lemma \ref{mzs-exists}(c), $-\sigma(S)-(2k-1)e_3\neq 0$. Assume by contradiction that $S'$ has a proper, nonempty zero-sum subsequence $T'$. We will examine two cases.

\medskip

\noindent\textbf{Case 1:} Suppose $-\sigma(S)-(2k-1)e_3 \not\in \textup{supp}(T')$.

Then $T' = \phi(T)\boldsymbol{\cdot} (-e_3)^{[i]}$ where $i\in [0,p-k-1]$ and $T$ is a subsequence of $S$. Observe that $$0 = \sigma(T') = \sigma (T) + (|T|-i)e_3,$$
so $|T|\equiv i \mod{p}$, and $T$ is a nonempty zero-sum subsequence of $S$. From Lemma \ref{mzs-exists} part (a), $$i\equiv |T| \in [2p-k,2p-1]\cup [4p-2k,2p-2+k] \equiv [p-k,p-1] \mod{p},$$
with the latter congruence holding since $p-k\leq 2p-2k$ and $k-2\leq p-3$,
which is contrary to the definition of $i$.

\medskip

\noindent\textbf{Case 2:} Suppose $-\sigma(S)-(2k-1)e_3 \in \textup{supp}(T')$.

Then $T' = \phi(T)\boldsymbol{\cdot} (-e_3)^{[i]} \boldsymbol{\cdot} (-\sigma(S)-(2k-1)e_3)$ where $i\in [0,p-k-1]$ and $T$ is a subsequence of $S$. Observe that $$0 = \sigma(T') = \sigma (T)-\sigma(S) + (|T|-i-2k+1)e_3,$$ so $\sigma(T) = \sigma(S)$ and $|T| \equiv i +2k -1 \mod{p}$. Consider $T^{[-1]}\boldsymbol{\cdot} S$, which will be zero-sum. Also,
$$|T^{[-1]}\boldsymbol{\cdot} S|=2p-2+k-|T| \equiv 2p -k -1 - i \mod{p}.$$
If $T=S$, then $2p-2+k=|S|=|T|\equiv i+2k-1$ forces $i=p-k-1$, in which case $T'=S'$, contradicting that $T'$ is a proper zero-sum subsequence of $S'$. Therefore  $T^{[-1]}\boldsymbol{\cdot} S$ is a nonempty zero-sum subsequence of $S$, so
Lemma \ref{mzs-exists} parts (a) and (c) implies
 $$2p -k -1 - i \in [2p-k,2p-1]\cup [4p-2k,2p-2+k] \equiv [p-k,p-1] \mod{p}.$$
From this, we have that $ i \in [p-k,p-1] \mod{p}$, which is also contrary to the definition of $i$.
\end{proof}

By Claim A, $S'$ satisfies the hypothesis of Lemma \ref{mzs_3d}. Thus, by setting $$f_1=e_1 +e_3,$$ there are $f_2$ and $f_3$ with $(f_1,f_2,f_3)$ a basis for $G'$ such that
\be\label{grump}S'=f_1^{[p-1]}\boldsymbol{\cdot} {\prod}^{\bullet}_{i\in[1,p-1]}{(\alpha_if_1+f_2)}\boldsymbol{\cdot} {\prod}^{\bullet}_{i\in[1,p]}{(\beta_if_1+\gamma_if_2+f_3)},\ee
where $\alpha_i,\beta_i,\gamma_i\in [0,p-1]$.
Since $(e_1,e_2,e_3)$ is a basis for $G'$ with $f_1=e_1+e_3$, it follows that $(f_1,e_2,e_3)$ is also a basis for $G'$. Moreover, we can replace $f_2$ by $af_1+f_2$, for any $a\in [0,p-1]$, and \eqref{grump} remains true using this  alternative value of $f_2$, adjusting the coefficients $\alpha_i$ and $\beta_i$ appropriately. Thus, by choosing $a\in [0,p-1]$ appropriately, we can w.l.o.g. assume \be\label{how}f_2\in \la e_2\ra\oplus\la e_3\ra.\ee

Our goal now will be to determine $f_2$. By using the substitution $f_1=e_1+e_3$ in \eqref{3d-part1}--\eqref{3d-part5}, we obtain
\begin{align}\label{3d-part1-u1}
&S'_1 = f_1^{[p-1]},
\\
\label{3d-part2-u1}
&S'_2 = {\prod}^{\bullet}_{i\in[1,\ell]}{(a_if_1+e_2+(1-a_i)e_3}),\\
\label{3d-part3-u1}
&S'_3 = {\prod}^{\bullet}_{i\in[1,v]}{(b_if_1+x_ie_2+(1-b_i)e_3)},
\\
\label{3d-part4-u1}
&S'_4 = (-e_3)^{[p-k-1]}, \quad\und\\
\label{3d-part5-u1}
&S'_5  =-\left(\sum_{i=1}^{\ell}{a_i} + \sum_{i=1}^v{b_i} -1\right)f_1 - \left(\ell+\sum_{i=1}^v{x_i}\right)e_2 + \left( \sum_{i=1}^{\ell}{a_i} + \sum_{i=1}^v{b_i} - 2k\right)e_3.
\end{align}

Let $\pi : G' \rightarrow \left< e_2 \right> \oplus \left< e_3 \right>$ be the projection map defined by $xf_1 +ye_2 +ze_3 \mapsto ye_2 +ze_3$. Let
$$\Omega :=\pi(S'_2\boldsymbol{\cdot} S'_3 \boldsymbol{\cdot} S'_4 \boldsymbol{\cdot} S'_5) = \pi((f_1^{[p-1]})^{[-1]}\bdot S').$$ By \eqref{grump} and \eqref{how}, we have $\vp_{f_2}(\Omega)\geq p-1$.
Since $x_i \in [2,p-1]$, the supports of   $\pi(S'_2)$, $\pi(S'_3)$ and $\pi(S'_4)$ are pairwise disjoint with   $|\pi(S'_3)|=v=p+k-1-\ell < p-2$ and $|\pi(S'_4)|=p-k-1<p-2$, so the only way that $\vp_{f_2}(\Omega)\geq p-1$ is possible if either $\vp_{f_2}(\pi(S'_2))\geq p-1$, or else $\vp_{ f_2}(\pi(S'_2))=p-2$ and $\pi(S'_5)= f_2$.

If $\vp_{ f_2}(\pi'(S_2))\geq p-1$, then
$$p-1\leq \vp_{f_2}(\pi(S'_2)) \leq\textup h(\pi(S'_2))=\textup h\left({\prod}^{\bullet}_{i\in[1,\ell]}{(a_ie_1+e_2)} \right) \leq p-1,$$
where the equality in the middle is due to $a_ie_1+e_2$ in $S$ corresponding to $e_2 + (1-a_i)e_3$ in $\pi(S'_2)$, and the desired conclusion holds. Therefore we now assume
\be\label{what}\vp_{ f_2}(\pi(S'_2))=p-2\quad\und\quad \pi(S'_5)=f_2.\ee

Since $k\in [2,p-2]$ ensures that $p\geq 5$, we conclude from \eqref{what} that $ f_2$ is a term of $\pi(S'_2)$, whence  $f_2 = e_2 + (1-a_j)e_3$ for some $j\in [1,\ell]$. Now the term $a_ie_1+e_2$ in $S$ corresponds to the term $e_2+(1-a_i)e_3$ in $\pi(S'_2)$. We can replace the basis $(e_1,e_2)$ with the basis $(e_1,e'_2)$, where $e'_2=a_je_1+e_2$, and the hypotheses of the lemma remain valid replacing $a_i$ by $a'_i=a_i-a_j$ for $i\in [1,\ell]$, and likewise adjusting the values of the $b_i$. This leaves the value $f_1=e_1+e_3$ unchanged, with $f_2=e'_2-a_je_1+(1-a_j)e_3=e'_2+e_3-a_jf_1$. Thus, by also replacing $f_2$ by $f'_2=f_2+a_jf_1=e'_2+e_3$, and defining $\pi$ using the basis $(f_1,e'_2,e_3)$ rather than $(f_1,e_2,e_3)$, we can w.l.o.g. assume
that $$ f_2 = e_2 +e_3$$ with $a_i = 0$ for exactly $p-2$ values of $i\in [1,\ell]$, say w.l.o.g. $a_i=0$ for $i\in [1,p-2]$. Then we can rewrite \eqref{3d-part1-u1}--\eqref{3d-part5-u1} as follows:
\begin{align}\label{3d-part1-u2}
&S'_1 = f_1^{[p-1]},\\ \label{3d-part2-u2}
&S'_2 = f_2^{[p-2]}\boldsymbol{\cdot}{\prod}^{\bullet}_{i\in[p-1,\ell]}{(a_if_1+f_2-a_ie_3}),
\\
\label{3d-part3-u2}
&S'_3 = {\prod}^{\bullet}_{i\in[1,v]}{(b_if_1+x_if_2+(1-b_i-x_i)e_3)},
\\
\label{3d-part4-u2}
&S'_4 = (-e_3)^{[p-k-1]},\quad\und\\
\label{3d-part5-u2}
&S'_5  =-\left(\sum_{i=p-1}^{\ell}{a_i} + \sum_{i=1}^v{b_i} -1\right)f_1 - \left(\ell+\sum_{i=1}^v{x_i}\right)f_2 \\ &+ \left(\ell+\Sum{i=1}{v}x_i+ \sum_{i=p-1}^{\ell}{a_i} +\sum_{i=1}^v{b_i} - 2k\right)e_3
 =-\left(\sum_{i=p-1}^{\ell}{a_i} + \sum_{i=1}^v{b_i} -1\right)f_1 + f_2,\nn
\end{align}
where $a_i\in [1,p-1]$ for all $i\in[p-1,\ell]$, with the final equality in \eqref{3d-part5-u2} since $\pi(S'_5)=f_2$.

Since $f_1=e_1+e_3$ and $f_2=e_2+e_3$ with $(e_1,e_2,e_3)$ a basis for $G'$, it follows that $(f_1,f_2,e_3)$ is a basis for $G'$. Thus $f_3=a f_1+bf_2+c e_3$ for some $a,b\in [0,p-1]$ and $c\in [1,p-1]$, with $c\neq 0$ since $(f_1,f_2,f_3)$ is also a basis for $G'$. Letting $c^{-1}\in[1,p-1]$ be the multiplicative inverse of $c$ modulo $p$, we have $$e_3=(-c^{-1}a )f_1+(-c^{-1}b )f_2+(c^{-1})f_3.$$
In view of \eqref{grump} and \eqref{what}, all terms of $S'_3\bdot S'_4$ must have their $f_3$-coefficient, when written using the basis $(f_1,f_2,f_3)$, equal to  $1$. Likewise, all $\ell-(p-2)\geq 2$ terms $x$ of $S'_2$  with $\pi(x)\neq f_2$ must also have their $f_3$-coefficient, when written using the basis $(f_1,f_2,f_3)$, equal to $1$. As a result, substituting the value
$e_3=(-c^{-1}a )f_1+(-c^{-1}b )f_2+(c^{-1})f_3$ into \eqref{3d-part2-u2} yields $-a_ic^{-1}\equiv 1\mod p$ for all $i\in [p-1,\ell]$, while substituting into \eqref{3d-part4-u2} yields (in view of $k\leq p-2$) that $-c^{-1}\equiv 1\mod p$. It follows that $$c=-1\quad\und\quad a_i=1\quad\mbox{ for all $i\in [p-1,\ell]$}.$$
Recalling that $a_i=0$ for $i\in [1,p-2]$, we conclude that  $S$ has the form
\be\label{one}S=e_1^{[p-1]}\boldsymbol{\cdot} e_2^{[p-2]} \boldsymbol{\cdot} (e_1 + e_2)^{[\ell-p+2]}\boldsymbol{\cdot} {\prod}^{\bullet}_{i\in[1,v]}{(b_ie_1+x_ie_2}),\ee
where $\ell\geq p$ and $x_i\in [2,p-1]$ for all $i\in [1,v]$.

By Lemma \ref{mzs-exists} part (b) and $k \not\equiv 0 \mod{p}$, $S$ has a minimal zero-sum subsequence of length $2p-1$, say $T$. Note that $|S\boldsymbol{\cdot}\big(e_1^{[p-1]}\boldsymbol{\cdot}e_2^{[p-2]}\big)^{[-1]}|=k+1\leq p-1$. Thus,
in view of \eqref{one}, $\ell\geq p$ and $v\geq 1$, we see that
$e_1$ is the only term of $S$ with multiplicity $p-1$, while there are at most $v=|S|-(p-1)-\ell\leq |S|-2p+1= k-1\leq p-2$ terms of $S$ neither equal to $e_1$ nor from the coset $\la e_1\ra +e_2$. As a result,
   Theorem \ref{probB} implies that this zero-sum subsequence $T$ must have the form
$$T=e_1^{[p-1]} \boldsymbol{\cdot}  e_2^{[\alpha]} \boldsymbol{\cdot} (e_1 + e_2)^{[\beta]},$$
where $\alpha \in [0,p-2]$ and $\alpha+\beta =p$. But then $0=\sigma(T)=(\beta-1)e_1$, which implies that $\beta=1$ and $\alpha=p-1$, contradicting that $\vp_{e_2}(S)=p-2$ (in view of \eqref{one}), which completes the proof.
\end{proof}

\medskip

\ We can now prove our main result.

\medskip
\begin{proof}[Proof of Theorem \ref{main-result}] Since  $k\not\equiv 0 \mod{p}$, Lemma \ref{mzs-exists}(b) implies that $S$ contains a  minimal zero-sum subsequence of length $\textup D(G)=2p-1$, say $T$.  By Theorem \ref{probB}, there is a basis $(e_1,e_2)$ for $G$ such that $T=e_1^{[p-1]}\bdot {\prod}^{\bullet}_{i\in[1,p]}(a_ie_1+e_2)$, for some $a_i\in [1,p]$ with $\Sum{i=1}{p}a_i\equiv 1\mod p$, ensuring that $S$ satisfies the hypotheses of Lemma \ref{height-of-S}.
Note, there can be at most $p-1=\textup D(C_p)-1$ terms from $\la e_1\ra$ in $S$, else  $S$ contain a nonempty  zero-sum subsequence with length at most $p\leq 2p-1-k$, contrary to hypothesis. Lemma \ref{height-of-S} now implies that there is some term $e'_2:=ae_1+e_2$, where $a\in [1,p]$, having  multiplicity $p-1$ in $S$. Since $(e_1,e_2)$ is a basis for $G$, so too is $(e_1,e'_2)$, with $S=e_1^{[p-1]}\bdot {e'_2}^{[p-1]}\bdot T'$ for some subsequence $T'$ of $S$, allowing us to apply Lemma \ref{p-1_repeating} to yield the desired structure for $S$.
%
%
%
%
\end{proof}


\begin{thebibliography}{99}

\bibitem{PropB-app8}  P. Baginski, A. Geroldinger, D. J.  Grynkiewicz, David, and  A. Philipp, Products of two atoms in Krull monoids and arithmetical characterizations of class groups, \emph{European J. Combin.} 34 (2013), no. 8, 1244--1268.

\bibitem{PropB-app1}  G. Bhowmik, I. Halupczok, J.-C. Schlage-Puchta, Inductive methods and zero-sum free sequences, \emph{Integers} 9 (2009), A40, 515--536.

\bibitem{PropB-casen=9} G. Bhowmik, I. Halupczok, J.-C. Schlage-Puchta, The structure of maximal zero-sum free sequences, \emph{Acta Arith.} 143 (2010), no. 1, 21--50.


\bibitem{Cohen}  G. Cohen and  G. Zemor, Subset sums and coding theory, \emph{Ast\'erisque} 258 (1999) 327--339.

\bibitem{S_<k-invariant-origin-delorme-ordaz} C. Delorme, O. Ordaz and D. Quiroz, Some Remarks on Davenport Constant, \emph{Discrete Math.} 237 (2001), 119--128.

\bibitem{Boas-K-rk2-Dav} P. van Emde Boas and D.  Kruyswijk,  A combinatorial problem on finite Abelian groups, \emph{Math. Centrum Amsterdam Afd. Zuivere Wisk.} 1967 (1967), ZW-009, 27 pp.

\bibitem{freeze}  M. Freeze and W. Schmid, Remarks on a generalization of the Davenport constant, \emph{Discrete Math.} 310 (2010), 3373--3389.

\bibitem{propBGaoGer-multof2} W. Gao and A. Geroldinger, On zero-sum sequences in $\Z/n\Z\oplus \Z/n\Z$, \emph{Integers} 3
(2003), \#A8.


\bibitem{PropB} W. Gao, A. Geroldinger, D. J. Grynkiewicz, Inverse Zero-Sum Problems III, \emph{Acta Arith.} \textbf{141.2} (2010), 245-279.

\bibitem{PropB-app5}  Weidong Gao, A. Geroldinger, Qinghong  Wang, A quantitative aspect of non-unique factorizations: the Narkiewicz constants, \emph{Int. J. Number Theory} 7 (2011), no. 6, 1463--1502.


\bibitem{Bounding_lemma}  W. Gao, D. J. Grynkiewicz, and  X. Xia, On $n$-sums in an abelian group, \emph{Combin, Probab. and Comput.} \textbf{25.3} (2016), 419-435.

\bibitem{Gao-liu} W. Gao, Y. Li, C. Liu and Y. Qu, Product-one subsequences over subgroups of a finite group, \emph{Acta Arithmetica} 189 (2019), 209--221.



\bibitem{PropB-app9}  A. Geroldinger, D. J.  Grynkiewicz, and Pingzhi  Yuan, On products of $k$ atoms II, \emph{Mosc. J. Comb. Number Theory} 5 (2015), no. 3, 3--59.


\bibitem{Alfredbook} A. Geroldinger and F. Halter-Koch, \emph{Non-unique Factorizations: Algebraic, Combinatorial and Analytic Theory}, Pure and Applied Mathematics, vol. 278, Chapman
and Hall/CRC, 2006.


\bibitem{Ger-Ruzsa-book} A. Geroldinger and I. Ruzsa,
\emph{Combinatorial number theory and additive group theory},
Courses and seminars from the DocCourse in Combinatorics and Geometry held in Barcelona, 2008. Advanced Courses in Mathematics. CRM Barcelona. Birkh\"auser Verlag, Basel, 2009. xii+330 pp.

\bibitem{PropB-app10}  A. Geroldinger and W. A. Schmid,  A characterization of class groups via sets of lengths, \emph{J. Korean Math. Soc.} 56 (2019), no. 4, 869--915.


\bibitem{Prop-B-app2}  B. Girard, Inverse zero-sum problems and algebraic invariants, \emph{Acta Arith.} 135 (2008), no. 3, 231--246.

\bibitem{PropB-app7} B. Girard,   On the existence of zero-sum subsequences of distinct lengths, \emph{Rocky Mountain J. Math.} 42 (2012), no. 2, 583--596.

\bibitem{Gbook} D. J. Grynkiewicz, \emph{Structural Additive Theory}, Developments in Mathematics 30 (2013),
Springer.

\bibitem{PropBfix} D. J. Grynkiewicz, Inverse Zero-Sum Problems III: Addendum, preprint, https://arxiv.org/abs/2107.10619.

\bibitem{inverse-mult} D. J. Grynkiewicz and Chao Liu,  A multiplicative property for zero-sums I, \emph{Discrete Math.} 345 (2022), no. 10, Paper No. 112974.

\bibitem{inverse-mult2} D. J. Grynkiewicz and Chao Liu,  A multiplicative property for zero-sums II, \emph{The Electronic J. of Combin.} 29 (2022), no. 3, Paper No. P3.12, 16 pp.

\bibitem{S_<k-invariant2/3p}
 D. J. Grynkiewicz, Chunlin Wang and  Kevin Zhao, The Structure of a Sequence with Prescribed Zero-Sum Subsequences, \emph{Integers} \textbf{20} (2020), Paper No. A3, 31 pp.

\bibitem{PropB-app3}  Huanhuan Guan,  Pingzhi Yuan, and   Xiangneng Zeng, Normal sequences over finite abelian groups, \emph{J. Combin. Theory Ser. A} 118 (2011), no. 4, 1519--1524.


\bibitem{olson-pgroup} J. E. Olson,  A combinatorial problem on finite Abelian groups I.
\emph{J. Number Theory} 1 (1969), 8--10.

\bibitem{olson-rk2} J. E. Olson,
A combinatorial problem on finite Abelian groups II.
\emph{J. Number Theory} 1 (1969), 195--199.


\bibitem{PropB-app6}  O. Ordaz, A. Philipp, I. Santos, and W. A.  Schmid, On the Olson and the strong Davenport constants, \emph{J. Th\'eor. Nombres Bordeaux} 23 (2011), no. 3, 715--750.


\bibitem{PropB-app12-InverseMatomakiPeng}   Jiangtao Peng, Yongke Qu, and Yuanlin  Li, Inverse problems associated with subsequence sums in $C_p\oplus C_p$, \emph{Front. Math. China} 15 (2020), no. 5, 985--1000.

\bibitem{PropB-app11}  Jiangtao Peng, Yuanlin Li, Chao Liu, and Meiling   Huang, On subsequence sums of a zero-sum free sequence over finite abelian groups, \emph{J. Number Theory} 217 (2020), 193--217.

\bibitem{Reiher-propB} C. Reiher, \emph{A proof of the theorem according to which every prime number possesses
Property B}, Ph.D Dissertation, University of Rostock, 2010.

\bibitem{Roy-s<k} B. Roy and R. Thangadurai, On zero-sum subsequences in a finite abelian $p$-group of length not exceeding a given number, \emph{J. Number Theory} 191 (2018), 246--257.

\bibitem{PropB-app4}  W. A. Schmid, The inverse problem associated to the Davenport constant for $C_2\oplus C_2\oplus C_{2n}$, and applications to the arithmetical characterization of class groups, \emph{Electron. J. Combin.} 18 (2011), no. 1, Paper 33, 42 pp.

\bibitem{schmid-propC-exact} W. A. Schmid, Restricted inverse zero-sum problems in groups of rank 2,
\emph{Quarterly journal of mathematics} 63 (2012), no. 2, 477--487.

\bibitem{Schmid-propB}  W. A. Schmid,  Inverse zero-sum problems II, \emph{Acta Arith.} 143 (2010), no. 4, 333--343.

\bibitem{kevin-S_<k-invariant} C. Wang and K. Zhao, On zero-sum subsequences of length not exceeding a given
number, \emph{J. Number Theory} 176 (2017), 365-374.


\end{thebibliography}
\end{document}